\def\beq{\begin{equation}}
\def\eeq{\end{equation}}
\def\ba{\begin{array}}
\def\ea{\end{array}}
\def\R{\mathbb R}
\newtheorem{thm}{Theorem}[section]
\newtheorem{lm}[thm]{Lemma}
\newtheorem{crl}[thm]{Corollary}
\theoremstyle{definition}
\newtheorem{rem}[thm]{Remark}
\theoremstyle{remark}
\begin{document}
\pagestyle{plain}
\title{A class of semilinear elliptic equations on lattice graphs }
\author{Bobo Hua}
\email{bobohua@fudan.edu.cn}
\address{Bobo Hua: School of Mathematical Sciences, LMNS, Fudan University, Shanghai 200433, China; Shanghai Center for Mathematical Sciences, Fudan University, Shanghai 200433, China}

\author{Ruowei Li}
\email{rwli19@fudan.edu.cn}
\address{Ruowei Li: School of Mathematical Sciences, Fudan University, Shanghai 200433, China}

\author{Lidan Wang}
\email{wanglidan@fudan.edu.cn}
\address{Lidan Wang: School of Mathematical Sciences, Fudan University, Shanghai 200433, China}


 \thanks{
The corresponding author is Lidan Wang, wanglidan@fudan.edu.cn.
}

\begin{abstract}
In this paper, we study the semilinear elliptic equation of the form
\begin{eqnarray*}
-\Delta u+a(x)|u|^{p-2}u-b(x)|u|^{q-2}u=0
\end{eqnarray*}
on lattice graphs $\mathbb{Z}^{N}$, where $N\geq 2$ and $2\leq p<q<+\infty$.
By the Br\'{e}zis-Lieb lemma and concentration compactness principle, we prove the existence of positive solutions to the above equation with constant coefficients $\bar{a},\bar{b}$ and the decomposition of bounded Palais-Smale sequences for the functional with variable coefficients, which tend to some constants $\bar{a},\bar{b}$ at infinity, respectively.
\end{abstract}

 \maketitle

{\bf Keywords:}  Semilinear elliptic equation, Lattice graphs, Palais-Smale sequences, Concentration compactness principle
\
\


\section{Introduction}
The semilinear elliptic equations on Euclidean spaces and Riemannian manifolds have been extensively studied in the literature. See for examples
\cite{CL3,CL1,DN,GS1,GS2,L1,L2,JT,KP,Y1,ZC} for various types of semilinear elliptic equations on Euclidean spaces. In Particular, they are the solitary waves of evolution equations \cite{BL3,B1,P,S2,W}. For the equations on Riemannian manifolds, such as Yamabe type equations and Kazdan-Warner equations, we refer readers to \cite{A,CL2,CL4,DL,KW,MRS,Y}.

Recently, people began to consider semilinear equations on discrete spaces. For example, a class of semilinear equations with
the nonlinearity of power type, including the well known Yamabe type equations, have been studied on graphs, see \cite{CZ,G,GJ,GJ1,GLY2,GLY3, HSZ,HL,ZL,ZL0,ZZ}. A class of semilinear
equations with the exponential nonlinearity, so-called Kazdan-Warner equations and the Liouville equations,
also have been studied in these papers \cite{G0,GJ3,GJH,GLY1,KS,ZC1} on graphs.

In this paper, we consider natural discrete models for Euclidean spaces, the integer lattice graphs. The $N$-dimensional integer lattice graph, denoted by $\mathbb{Z}^N$, consists of the set of vertices $\mathbb{V}=\mathbb{Z}^N$ and the set of edges $\mathbb{E}=\{(x,y): x,\,y\in\mathbb{Z}^{N},\,\underset {{i=1}}{\overset{N}{\sum}}|x_{i}-y_{i}|=1\}.$

We study the semilinear elliptic equation
\begin{equation}\label{1-0}
-\Delta u+a(x)|u|^{p-2}u-b(x)|u|^{q-2}u=0
\end{equation}
on lattice graphs $\mathbb{Z}^N$, where $N\geq 2$ and $2\leq p<q<+\infty.$ In this paper, we always assume that the coefficients satisfy
\begin{itemize}
\item[(A1)]  $a(x):\mathbb{Z}^N\rightarrow \R$ is a nonnegative function satisfying
\begin{eqnarray*}\label{1-1}
\lim_{|x|\to +\infty}a(x)=\bar{a}>0;
\end{eqnarray*}
\item[(A2)] $b(x):\mathbb{Z}^N\rightarrow \R$ is a nonnegative function satisfying
\begin{eqnarray*}\label{1-2}
\lim_{|x|\to +\infty}b(x)=\bar{b}>0.
\end{eqnarray*}
\end{itemize}

The equation (\ref{1-0}) has been studied by Kuzin and Pohozaev \cite{KP} on $\R^N.$ The first key step lies in the analysis of its limiting equation
 \begin{equation}\label{1-3}
 -\Delta u+\bar{a}|u|^{p-2}u-\bar{b}|u|^{q-2}u=0.
 \end{equation}
More precisely, Kuzin and Pohozaev first proved the existence of positive ground state solutions to the equation (\ref{1-3}) on $\R^N.$ Then, they studied the behavior of Palais-Smale sequences for the functional corresponding to the equation (\ref{1-0}) on $\R^N.$  As an application, they proved the existence of nontrivial solutions to the equation (\ref{1-0}) with variable coefficients by using the previous results.

We are concerned with the existence of positive solutions to the equation (\ref{1-3}) and the behavior of Palais-Smale sequences for the functional related to the equation (\ref{1-0}) on lattice graphs $\mathbb{Z}^{N}$. As we know, during the past years, there are a lot of existence results for the semilinear equation
 \begin{equation}\label{a}
-\Delta u=g(u)
\end{equation} on Euclidean spaces. For example, Berestycki and Lions \cite{BL1} studied the equation (\ref{a}) on $\R^N$ with some assumptions on $g$
 and proved the existence of a positive, radially symmetric, least-energy solution $u$ by means of a constrained minimization method and the Strauss lemma \cite{S2}. The Strauss lemma is crucial for the argument which brings the compactness for radial $H^1$ functions, while the compactness fails for general $H^1$ functions on $\R^N.$  For more related works, we refer to \cite{BL2,BL3,B1,BLP,HIT,M,M1,P,S2}.

 Now we consider the equation \eqref{1-2}. In the continuous setting, Kuzin and Pohozaev \cite{KP} proved the existence of positive solutions to the equation (\ref{1-3}) via the variational method restricted on radial functions. We sketch their proof briefly. Let $\mathcal{E}_{p,q}$ be the completion of $C_c^\infty(\R^N),\, N\geq 3,$ with respect to the norm
\begin{eqnarray*}
\|u\|_{\mathcal{E}_{p,q}}=\|\nabla u\|_{2}+\|u\|_{p}+\|u\|_{q},\qquad 1\leq p\leq q<+\infty.
\end{eqnarray*}
Write $\mathcal{E}_{p}=\mathcal{E}_{p,p}$ and $\mathcal{E}^{rad}_{p,q}$ for the space of radial functions in $\mathcal{E}_{p,q}.$ For any $2\leq p<q<2^*:=\frac{2N}{N-2},$  define two functionals on $\mathcal{E}^{rad}_{p,q},$
 $$\widetilde{J}_1(u)=\frac{1}{2}\|\nabla u\|_{2}^{2},\quad \widetilde{J}_{2}(u)=\frac{\alpha}{p}\|u\|_{p}^{p}-\beta\|u\|_{q}^{q},\quad \alpha,\,\beta>0.$$ Then consider the variational problem
 $$\inf\{\widetilde{J}_1(u):u\in \mathcal{E}^{rad}_{p,q},\widetilde{J}_2(u)=-1\}.$$ By the compactness $\mathcal{E}^{rad}_{p} \looparrowright L^{rad}_{q}$ for $p<q<2^*,$ one can prove the existence result by the standard argument.

 In our setting, the main difficulty for the analysis is that there is no proper counterparts for radial functions on $\mathbb{Z}^N.$ Hence the Strauss lemma on $\mathbb{Z}^N$ is unknown, and we do not have the compactness in this problem. Moreover, the rescaling trick on the Euclidean space is not available on the lattice graph. For notions on lattice graphs, we refer to Section~2. To circumvent the difficulties, we define functionals $J_1, J_2 : \mathcal{E}_{p,q}\rightarrow\R$ in the form
\begin{eqnarray*}
J_{1}(u)=\frac{1}{2}\|\nabla u\|_{2}^{2}+\frac{\alpha}{p}\|u\|_{p}^{p},\qquad
J_{2}(u)=\beta\|u\|_{q}^{q}
\end{eqnarray*}
with $\alpha,\,\beta>0,\,2\leq p<q<+\infty$ and consider the variational problem $$\inf\{J_1(u):u\in \mathcal{E}_{p,q},J_2(u)=1\}.$$
Note that the above variational problem does not work for the continuous case, but we can prove the existence of a minimizer in the discrete setting. The key point of our proof is that any minimizing sequence, passing to a subsequence and with proper translations, has a non-vanishing limit. Then we can prove the result by analyzing the bubbling structure of the limit. Namely, we prove that the minimizing sequence $\{u_n\}$ is decomposed into infinitely many bubbles
$$u_n(x)=u_{(0)}(x)+\underset {i=1}{\overset{\infty}{\Sigma}}u_{(i)}(x-x^{i}_n),$$
where $|x^{i}_n|\rightarrow+\infty$ and $|x^{j}_n-x^{i}_n|\rightarrow+\infty,\,j\neq i$ as $n\rightarrow+\infty$. Then, the result follows from the convexity of the functionals. Now we state our first main result of this paper.
\begin{thm}\label{t-9}
{\rm
Let $N\geq 2$ and $2\leq p<q<+\infty.$ Then for any positive constant $\tilde{a}$, there are infinitely many positive constants $\tilde{b}$ such that the semilinear elliptic equation
\begin{equation}\label{0-0}
-\Delta u+\tilde{a}|u|^{p-2}u-\tilde{b}|u|^{q-2}u=0
\end{equation}
has at least a positive solution $u\in\mathcal{E}_{p,q}$ on lattice graphs $\mathbb{Z}^{N}$.

}
\end{thm}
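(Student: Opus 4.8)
The plan is to obtain a positive solution as a constrained minimizer and then read the equation off the Euler--Lagrange relation, exploiting the discrete embedding $\ell^p\hookrightarrow\ell^q$ ($p\le q$) in place of the missing Strauss lemma. Fix $\alpha=\tilde a>0$ and any $\beta>0$, set $J_1(u)=\frac12\|\nabla u\|_2^2+\frac{\alpha}{p}\|u\|_p^p$ and $J_2(u)=\beta\|u\|_q^q$ as in the introduction, and for $\mu>0$ put $g(\mu)=\inf\{J_1(u):u\in\mathcal{E}_{p,q},\ J_2(u)=\mu\}$. Using the elementary lattice inequalities $\|u\|_q\le\|u\|_p$ and $\|u\|_q^q\le\|u\|_\infty^{q-p}\|u\|_p^p$ together with $J_1(u)\ge\frac{\alpha}{p}\|u\|_p^p$, I would first show $0<c\,\mu^{p/q}\le g(\mu)<\infty$. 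Since replacing $u$ by $|u|$ does not increase $\|\nabla u\|_2$ on a graph, every minimizing sequence may be taken nonnegative.

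Next I would run a concentration--compactness analysis of a minimizing sequence $\{u_n\}$ for $m:=g(1)$, which is bounded in $\mathcal{E}_{p,q}$. The non-vanishing step, delicate on $\R^N$, is elementary here: from $\beta\|u_n\|_q^q=1$, the boundedness of $\|u_n\|_p$, and $\|u_n\|_q^q\le\|u_n\|_\infty^{q-p}\|u_n\|_p^p$ one gets $\|u_n\|_\infty\ge\delta>0$, so translating a near-maximum to the origin and passing to a pointwise (hence weak) limit produces a nonzero bubble. Applying the Br\'ezis--Lieb lemma to the $\|\cdot\|_p^p$ and $\|\cdot\|_q^q$ terms and the weak-convergence splitting to $\|\nabla\cdot\|_2^2$ yields exact additivity $J_i(u_n)=\sum_j J_i(u_{(j)})+o(1)$; iterating the extraction along translates with $|x_n^i-x_n^j|\to\infty$, and noting that each nonzero bubble captures a definite share of the $L^q$-mass (again by $\|\cdot\|_q^q\le\|\cdot\|_\infty^{q-p}\|\cdot\|_p^p$), I would arrive at the full decomposition with $\sum_j J_2(u_{(j)})=1$ and $\sum_j J_1(u_{(j)})=m$.

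The decisive input is strict subadditivity of $g$. Writing $u=\mu^{1/q}w$ with $J_2(w)=1$ gives $g(\mu)=\inf_w\big(\tfrac12\mu^{2/q}\|\nabla w\|_2^2+\tfrac{\alpha}{p}\mu^{p/q}\|w\|_p^p\big)$, an infimum of concave functions of $\mu$ (since $2/q,p/q\in(0,1)$) with $g(0^+)=0$, hence concave and subadditive; the lower bound $g(\mu)\ge c\,\mu^{p/q}$ forces $g(\mu)/\mu\to\infty$ as $\mu\to0$, which excludes linearity of $g$ on any $[0,s]$ and upgrades subadditivity to $g(a)+g(b)>g(a+b)$ for $a,b>0$. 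Since $m\ge\sum_j g(\mu_j)$ with $\mu_j=J_2(u_{(j)})$ and $\sum_j\mu_j=1$, this forces a single surviving bubble $u:=u_{(j_0)}$ with $J_2(u)=1$ and $J_1(u)=m$, a genuine minimizer. The Lagrange rule then gives $-\Delta u+\alpha|u|^{p-2}u=\lambda q\beta|u|^{q-2}u$, and testing against $u$ shows $\tilde b:=\lambda q\beta=\|\nabla u\|_2^2+\alpha\|u\|_p^p>0$, so $u$ solves \eqref{0-0} with $\tilde a=\alpha$; nonnegativity together with the maximum principle on the connected graph $\mathbb{Z}^N$ (at a zero of $u$ the equation forces all neighbours to vanish) gives $u>0$. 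Finally, letting the constraint level $\mu$ vary and using $g'(\mu)=\lambda_\mu$ with $g'(\mu)\to\infty$ as $\mu\to0$, the multipliers, hence $\tilde b=q\beta\lambda_\mu$, take infinitely many positive values for the fixed $\tilde a$. The main obstacle throughout is the loss of compactness under translations: everything hinges on proving that the bubble decomposition loses no energy and that strict subadditivity of $g$ collapses it to a single bubble.
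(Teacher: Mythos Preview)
Your proposal is correct and follows essentially the same route as the paper: constrained minimization of $J_1$ on $\{J_2=1\}$, non-vanishing via $\|u\|_q^q\le\|u\|_\infty^{q-p}\|u\|_p^p$, Br\'ezis--Lieb bubble extraction under translations, the Lagrange multiplier to read off the equation, and positivity from the maximum principle. The only cosmetic differences are that you phrase the single-bubble step as strict subadditivity of the value function $g(\mu)$ (the paper performs the equivalent rescaling $\bar u=\theta^{-1/q}u_{(i)}$ by hand, using the same inequality $2/q,\,p/q<1$), and you vary the constraint level $\mu$ rather than the parameter $\beta$ to produce infinitely many $\tilde b$; these are interchangeable.
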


\begin{rem}\label{4}
{\rm

\begin{itemize}

\item[(i)] In the continuous setting, the powers of nonlinear terms satisfy $p<q<2^*$. However, we remove the constraint that $q<2^*$ (subcritical) since we have the embedding $l^{s}$ into $l^{t}$ for $s<t$ in the discrete setting.

\item[(ii)] In the discrete setting, Ge, Zhang and Lin \cite{G,ZL0} proved the $p$-th Yamabe equation has a positive solution for one pair of coefficients on finite graphs. While we can prove that for any $\tilde{a}>0$, there are infinitely many $\tilde{b}>0$ such that
    the equation (\ref{0-0}) has at least a positive solution. Since the rescaling trick is not available on the lattice graph $\mathbb{Z}^N$, we can not prove that the equation (\ref{0-0}) has a positive solution for any pair $(\tilde{a},\tilde{b})\in (0,\infty)\times(0,\infty)$ as in the continuous case.

\item[(iii)] In the discrete setting, the authors in \cite{GJ,GLY3,ZZ} proved the existence of nontrivial solutions to Yamabe equations and Schr\"{o}dinger equations with the coefficient $a(x)$ satisfying $a(x)\rightarrow+\infty$ as $|x|\rightarrow+\infty$, which guarantees the compact embedding  on $\mathbb{Z}^N$. Since our assumption on $a(x)$ is weaker, particularly for the constant coefficient $\tilde{a}>0$, we have to use the Br\'{e}zis-Lieb lemma and concentration compactness principle to prove the existence of positive solution to the equation (\ref{0-0}). This differs from the continuous case. In the continuous case, Kuzin and Pohozaev \cite{KP} proved the existence result via the variational method restricted on radial functions since the Strauss lemma brings the compactness for radial functions.
\item[(iv)] We prove the existence of positive solutions to the equation (\ref{0-0}), while we do not know whether they are in fact ground state solutions. The statement that they are ground state solutions follows from the Pohozaev identity and the scaling trick on Euclidean spaces $\R^N,$ see \cite[Theorem~15.2]{KP}, which are unknown on lattice graphs $\mathbb{Z}^N.$ This leads to an open problem for the existence of nontrivial solutions to the equation (\ref{1-0}) with variable coefficients on $\mathbb{Z}^N$.

    \end{itemize}
}
\end{rem}

 Next, we analyze the behavior of Palais-Smale sequences for the functional related to the equation (\ref{1-0}) on lattice graphs $\mathbb{Z}^N$. In the continuous setting, Kuzin and Pohozaev obtained the decomposition of Palais-Smale sequences by using the Br\'{e}zis-Lieb lemma and concentration compactness method. We follow their ideas to prove the result on $\mathbb{Z}^N$. Namely, we set up a discrete version of the Br\'{e}zis-Lieb lemma and concentration compactness principle and adopt them to get the desired result.

We firstly introduce the functionals corresponding to the equations (\ref{1-0}) and (\ref{1-2}), respectively. The energy functional $\Phi: \mathcal{E}_{p,q}\rightarrow\R$ associated to the equation (\ref{1-0}) is given by
\begin{eqnarray*}
\Phi(u)=\frac{1}{2}\int_{\mathbb{Z}^N}|\nabla u|^{2}\,d\mu+\frac{1}{p}\int_{\mathbb{Z}^N}a(x)|u|^{p}\,d\mu-\frac{1}{q}\int_{\mathbb{Z}^N}b(x)|u|^{q}\,d\mu,
\end{eqnarray*} where $|\nabla u|$ is the length of discrete gradient of $u,$ and $\mu$ is the counting measure on $\mathbb{Z}^N.$
Formally, our problem has a variational structure. Let $C_c(\mathbb{Z}^N)$
be the set of all functions with finite support on $\mathbb{Z}^N$. Then given $u\in \mathcal{E}_{p,q}$ and $\phi\in C_{c}(\mathbb{Z}^{N})$, the Gateaux derivative of $\Phi$ is given by
\begin{eqnarray*}\label{1-7}
\langle\Phi'(u),\phi\rangle=\int_{\mathbb{Z}^N}\nabla u\nabla \phi \,d\mu+\int_{\mathbb{Z}^N}a(x)|u|^{p-2}u\phi\,d\mu-\int_{\mathbb{Z}^N}b(x)|u|^{q-2}u\phi\,d\mu.
\end{eqnarray*}
The function $u\in \mathcal{E}_{p,q}$ is called a weak solution of (\ref{1-0}), if this Gateaux derivative is zero in every direction $\phi\in C_{c}(\mathbb{Z}^{N})$.

Corresponding to the equation (\ref{1-2}), we define the variational functional $\bar{\Phi}: \mathcal{E}_{p,q}\rightarrow\R$ as
\begin{eqnarray*}
\bar{\Phi}(u)=\frac{1}{2}\int_{\mathbb{Z}^N}|\nabla u|^{2}\,d\mu+\frac{1}{p}\int_{\mathbb{Z}^N}\bar{a}|u|^{p}\,d\mu-\frac{1}{q}\int_{\mathbb{Z}^N}\bar{b}|u|^{q}\,d\mu.
\end{eqnarray*}

Let $I_{0}(\bar{\Phi}):=\underset {u\in \mathcal{P}(\bar{\Phi})}{\inf}\bar{\Phi},$ where $\mathcal{P}(\bar{\Phi})$ is the set of all nontrivial critical points $u\in\mathcal{E}_{p,q}$ of the functional $\bar{\Phi}$.

Recall that, for a given functional $\Phi\in C^1(\mathcal{E}_{p,q},\R)$, we say that $\{u_n\}\subset\mathcal{E}_{p,q}$ is a Palais-Smale sequence of the functional $\Phi$ at level $c\in\R$, if it satisfies that
\begin{eqnarray*}
\Phi(u_n)\rightarrow c, \qquad \text{and}\qquad
\Phi'(u_n)\rightarrow 0, \qquad \text{in~}\mathcal{E}^{*}_{p,q},
\end{eqnarray*}
where $\mathcal{E}^{*}_{p,q}$ is the dual space of $\mathcal{E}_{p,q}$.

Now, we give our second main result, a discrete analog of the decomposition of bounded Palais-Smale sequences of the functional $\Phi$ in \cite[Theorem~14.1]{KP}.
\begin{thm}\label{t-0}
{\rm
Let $N\geq 2$ and $2\leq p<q<+\infty.$ Assume that (A1) and (A2) hold and $I_{0}(\bar{\Phi})>0$. If $\{u_n\}\subset\mathcal{E}_{p,q}$ is a Palais-Smale sequence of the functional $\Phi$ at level $c\in\R$, then there exist nonnegative integer $k$,  a solution $u_{(0)}\in\mathcal{E}_{p,q}$ to the equation (\ref{1-0}), $k$ nontrivial solutions $u_{(i)}\in\mathcal{E}_{p,q}$ to the equation (\ref{1-2}) and $k$ sequences $\{y_{n}^{i}\}\subset\mathbb{Z}^N,\, i=1,2\cdots k$ with $|y_{n}^{i}|\rightarrow+\infty$ and $|y_{n}^{j}-y_{n}^{i}|\rightarrow+\infty$ for $j\neq i$ as $n\rightarrow+\infty$, such that up to a subsequence,
\begin{itemize}
\item[(i)] $(u_n-u_{(0)}-\underset {{i=1}}{\overset{k}{\Sigma}}u_{(i)}(x-y_{n}^{i}))\rightarrow0,\qquad\text{in~}\mathcal{E}_{p,q};$
\item[(ii)] $\Phi(u_{(0)})+\underset {{i=1}}{\overset{k}{\Sigma}}\bar{\Phi}(u_{(i)})=c.$
\end{itemize}
}
\end{thm}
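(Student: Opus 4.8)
The plan is to run an iterative bubbling decomposition, peeling off one profile at a time and bounding the number of profiles by the positivity of $I_0(\bar\Phi)$. First I would extract the profile ``at the origin''. Since $\{u_n\}$ is bounded in $\mathcal{E}_{p,q}$, after passing to a subsequence it converges pointwise on $\mathbb{Z}^N$ to some $u_{(0)}\in\mathcal{E}_{p,q}$ (a bounded sequence in $\ell^p$ admits a pointwise convergent subsequence by a diagonal argument, and this limit is the weak limit). Testing $\langle\Phi'(u_n),\phi\rangle\to0$ against a fixed $\phi\in C_c(\mathbb{Z}^N)$ and using pointwise convergence shows $\langle\Phi'(u_{(0)}),\phi\rangle=0$ for all such $\phi$, so $u_{(0)}$ solves (\ref{1-0}). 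I then set $v_n^1:=u_n-u_{(0)}$, so that $v_n^1\to0$ pointwise. Applying the discrete Br\'ezis-Lieb lemma to split each of $\|\nabla\cdot\|_2^2$, $\|\cdot\|_p^p$ and $\|\cdot\|_q^q$, together with (A1)--(A2) to replace the variable coefficients by $\bar a,\bar b$ on the vanishing remainder (writing $\int_{\mathbb{Z}^N}(a(x)-\bar a)|v_n^1|^p\,d\mu\to0$ by splitting into $|x|\le R$ and $|x|>R$), I obtain the energy splitting $\Phi(u_n)=\Phi(u_{(0)})+\bar\Phi(v_n^1)+o(1)$ and the analogous splitting of the derivatives, which (using $\Phi'(u_{(0)})=0$) shows that $\{v_n^1\}$ is a bounded Palais-Smale sequence for the translation-invariant functional $\bar\Phi$ at level $c-\Phi(u_{(0)})$.

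Next comes the iteration. At stage $j\ge1$ I have a bounded Palais-Smale sequence $\{v_n^j\}$ for $\bar\Phi$ with $v_n^j\to0$ pointwise, at level $c_j$. I invoke the dichotomy: either $v_n^j\to0$ strongly in $\mathcal{E}_{p,q}$, in which case the process stops and I set $k=j-1$; or it does not, and the discrete concentration-compactness (non-vanishing) principle yields $\{y_n^j\}\subset\mathbb{Z}^N$ along which the translates $v_n^j(\cdot+y_n^j)$ have a nonzero pointwise limit $u_{(j)}$. Since $v_n^j\to0$ pointwise, fixed-location mass vanishes, which forces $|y_n^j|\to+\infty$. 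Because $\bar\Phi$ is invariant under lattice translations, $\{v_n^j(\cdot+y_n^j)\}$ is again a Palais-Smale sequence for $\bar\Phi$, so its weak limit $u_{(j)}$ is a nontrivial critical point of $\bar\Phi$, i.e.\ a nontrivial solution of (\ref{1-3}). I then set $v_n^{j+1}:=v_n^j-u_{(j)}(\cdot-y_n^j)$; a further Br\'ezis-Lieb splitting (after translation) gives $\bar\Phi(v_n^j)=\bar\Phi(u_{(j)})+\bar\Phi(v_n^{j+1})+o(1)$ and shows $\{v_n^{j+1}\}$ is a bounded Palais-Smale sequence for $\bar\Phi$ at level $c_{j+1}=c_j-\bar\Phi(u_{(j)})$, with $v_n^{j+1}\to0$ pointwise. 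The mutual divergence $|y_n^j-y_n^i|\to+\infty$ for $i<j$ is automatic: since $v_n^{i+1}(\cdot+y_n^i)\to0$ pointwise by construction, the next profile cannot sit at bounded distance from $y_n^i$, for otherwise it would already have been subtracted.

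Finally I would prove termination after finitely many steps. The key is the coercivity identity: for any bounded Palais-Smale sequence $\{w_n\}$ of $\bar\Phi$ at level $d$,
\[
\bar\Phi(w_n)-\tfrac1q\langle\bar\Phi'(w_n),w_n\rangle=\Big(\tfrac12-\tfrac1q\Big)\|\nabla w_n\|_2^2+\Big(\tfrac1p-\tfrac1q\Big)\bar a\,\|w_n\|_p^p\ \ge\ 0,
\]
where both prefactors are positive because $2\le p<q$; letting $n\to+\infty$ and using $\langle\bar\Phi'(w_n),w_n\rangle\to0$ gives $d\ge0$. Hence every level $c_j\ge0$. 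Since each extracted profile satisfies $\bar\Phi(u_{(j)})\ge I_0(\bar\Phi)>0$, the levels drop by at least $I_0(\bar\Phi)$ at each stage, so the process stops after at most $k\le(c-\Phi(u_{(0)}))/I_0(\bar\Phi)$ steps. When it stops, $v_n^{k+1}\to0$ in $\mathcal{E}_{p,q}$, which is precisely conclusion (i), and telescoping the energy splittings yields $c=\Phi(u_{(0)})+\sum_{i=1}^k\bar\Phi(u_{(i)})$, which is conclusion (ii). I expect the main obstacle to be the concentration-compactness step---rigorously producing the non-vanishing translate and verifying that its weak limit is a genuine nontrivial critical point of $\bar\Phi$---together with the accompanying Br\'ezis-Lieb splitting of the nonlinear terms \emph{at the level of the derivatives}, which is what propagates the Palais-Smale property to each remainder. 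The coefficient replacement via (A1)--(A2) and the termination bookkeeping are comparatively routine once these discrete tools are in place.
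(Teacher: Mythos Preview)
Your proposal is correct and follows essentially the same strategy as the paper: extract the weak limit $u_{(0)}$ as a solution of (\ref{1-0}), pass from $\Phi$ to $\bar\Phi$ on the remainder via (A1)--(A2), then iterate a translation/non-vanishing step for $\bar\Phi$ using Br\'ezis--Lieb splitting, and terminate using $\bar\Phi(u_{(j)})\ge I_0(\bar\Phi)>0$. The paper packages the splitting and coefficient-replacement steps into preparatory lemmas (its Lemmas~\ref{l-1}, \ref{l-3}, \ref{l-5} and their corollaries) and phrases the dichotomy directly in terms of $\|v_n^j\|_\infty$ via the Lions-type Lemma~\ref{l-0}, whereas you phrase it as ``strong convergence vs.\ not''; these are equivalent here since $\|v_n^j\|_\infty\to0$ together with the Palais--Smale condition forces strong convergence. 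Your coercivity identity making $c_j\ge0$ explicit is a nice touch that the paper leaves implicit in its termination argument.
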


\begin{rem}\label{l}
{\rm
Let $(\Gamma,S)$ be a Cayley graph of a discrete group with a finite generating set $S$. Particularly, the lattice graph $\mathbb{Z}^N$ is a Cayley graph of a free abelian group. By similar arguments as in this paper, the results of Theorem \ref{t-9} and Theorem \ref{t-0} hold on  Cayley graphs.

}
\end{rem}

This paper is organized as follows. In Section 2, we first introduce the setting for graphs, and then give some basic results. In Section 3, we prove some important lemmas for the functionals $\Phi$ and $\bar{\Phi}$. In Section 4, we prove the main results, Theorem \ref{t-9} and \ref{t-0}, by using the Br\'{e}zis-Lieb lemma and concentration compactness principle.

\section{Preliminaries}
 In this section, we first introduce the setting for graphs, and then give some preliminary results.

  Let $G=(\mathbb{V},\mathbb{E})$ be a connected, locally finite graph, where $\mathbb{V}$ denotes the vertex set and $\mathbb{E}$ denotes the edge set. We call vertices $x$ and $y$ neighbors, denoted by $x\sim y$, if there is an edge connecting them, i.e. $(x,y)\in \mathbb{E}$.

For any $x,\,y\in \mathbb{V}$, the distance $|x-y|$ is defined as the minimum number of edges connecting $x$ and $y$. Let $B_{R}(x)=\{y\in \mathbb{V}: |x-y|\leq R\}$ be the ball centered at $x$ with radius $R$ in $\mathbb{V}$. We write $B_{R}=B_{R}(0)$ and $B_{R}^{c}=\mathbb{V}\setminus B_{R}$ for convenience. In this paper, the constant $C$ may change line to line.

We denote the space of functions on $\mathbb{V}$ by $C(\mathbb{V})$.
For $u\in C(\mathbb{V})$, its support set is defined as $\text{supp}(u)=\{  x\in \mathbb{V}: u(x)\neq 0 \}$. Let $C_c(\mathbb{V})$
be the set of all functions with finite support.

For  $u\in C(\mathbb{V})$, we define the difference operator, for any $x\sim y $, as $\nabla_{xy}u=u(y)-u(x).$ The gradient form $\Gamma,$ called the ``carr\'e du champ" operator, is defined as
\begin{eqnarray*}
\Gamma(u,v)(x)=\frac{1}{2}\underset {y\sim x}{\sum}(u(y)-u(x))(v(y)-v(x)):=\nabla u \nabla v.
\end{eqnarray*}
In particular, we write $\Gamma(u)=\Gamma(u,u)$ and denote the length of its gradient by
\begin{eqnarray*}
|\nabla u|(x)=\sqrt{\Gamma(u)(x)}=(\frac{1}{2}\underset {y\sim x}{\sum}(u(y)-u(x))^{2})^{\frac{1}{2}}.
\end{eqnarray*}
For $u\in C(\mathbb{V})$, we define the Laplacian of $u$ as
\begin{eqnarray*}
\Delta u(x)=\underset {y\sim x}{\sum}(u(y)-u(x)).
\end{eqnarray*}
The space $l^p(\mathbb{V})$ is defined as
\begin{eqnarray*}
l^{p}(\mathbb{V})=\{u\in C(\mathbb{V}): \|u\|_p<\infty\},
\end{eqnarray*}
where
\indent
\[\ \|u\|_{p}=\left\{\begin{array}{ll} (\sum\limits_{x\in \mathbb{V}}|u(x)|^p)^{\frac{1}{p}},\quad &\text {if}~1\leq p<\infty, \\ \underset {x\in \mathbb{V}}{\sup}|u(x)|,\quad &\text {if}~p=\infty. \end{array}\right. \]
We can define the space $l^{p}(\mathbb{E})$ analogously. In addition, we define the space $\mathcal{E}_{p,q}$ as the completion of $C_c(\mathbb{V})$ with respect to the norm
\begin{eqnarray*}
\|u\|_{\mathcal{E}_{p,q}}=\|\nabla u\|_{2}+\|u\|_{p}+\|u\|_{q},\qquad 1\leq p\leq q<+\infty.
\end{eqnarray*}
Write $\mathcal{E}_{p}=\mathcal{E}_{p,p}$. Clearly, $\mathcal{E}_2$ is a discrete analog of $H^1$ space. Furthermore, we can define the general space $\mathcal{E}_{p(a),q(b)}$ similarly, that is, the closure of $C_c(\mathbb{V})$ in the norm
\begin{eqnarray*}
\|u\|_{\mathcal{E}_{p(a),q(b)}}=\|\nabla u\|_{2}+\|a^{\frac{1}{p}}u\|_{p}+\|b^{\frac{1}{q}}u\|_{q},\qquad 1\leq p\leq q<+\infty,
\end{eqnarray*}
where $a$ and $b$ are called weighted functions.  Note that the spaces defined above are Banach spaces. In particular, they are reflexive spaces if $p>1$.

For convenience, for any $u\in C(\mathbb{V})$, we always write
$
\int_{\mathbb{V}}u\,d\mu:=\sum\limits_{x\in \mathbb{V}}u(x),
$
where $\mu$ denotes the counting measure in $\mathbb{V}$, that is, for any subset $A\subset \mathbb{V},\,\mu(A)=\sharp\{x:x\in A\}.$

Next, we give some basic results.

\begin{lm}\label{09}
{\rm
Assume that (A1) and (A2) hold. Let $u\in \mathcal{E}_{p,q}$, then there exist two positive constants $c_1$ and $c_2$ such that
\begin{eqnarray*}
c_1\|u\|_{\mathcal{E}_{p,q}}\leq\|u\|_{\mathcal{E}_{p(a),q(b)}}\leq c_2\|u\|_{\mathcal{E}_{p,q}}.
\end{eqnarray*}
}
\end{lm}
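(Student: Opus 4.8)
The plan is to prove the two inequalities separately, relying on the fact that the weights $a$ and $b$ are globally bounded together with a decomposition of $\mathbb{Z}^N$ into a finite ball $B_R$ and its complement, chosen so that the weights are bounded below on $B_R^c$.

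First I would dispose of the upper bound $\|u\|_{\mathcal{E}_{p(a),q(b)}}\le c_2\|u\|_{\mathcal{E}_{p,q}}$, which is the routine direction. Since $a\ge 0$ and $a(x)\to\bar{a}$ by (A1), and since each ball $B_R$ contains only finitely many vertices, $a$ is bounded, say $0\le a\le A$; likewise $0\le b\le B$ by (A2). Then $\|a^{\frac1p}u\|_p^p=\sum_x a(x)|u(x)|^p\le A\|u\|_p^p$ and $\|b^{\frac1q}u\|_q^q\le B\|u\|_q^q$, while the gradient term $\|\nabla u\|_2$ is common to both norms. Taking $c_2=\max\{1,A^{\frac1p},B^{\frac1q}\}$ yields the claim.

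The lower bound $c_1\|u\|_{\mathcal{E}_{p,q}}\le\|u\|_{\mathcal{E}_{p(a),q(b)}}$ is the substantive direction; as $\|\nabla u\|_2$ occurs in both norms, it suffices to bound $\|u\|_p$ and $\|u\|_q$ by $\|u\|_{\mathcal{E}_{p(a),q(b)}}$. Using (A1) and (A2) I would fix $R$ so large that $a(x)>\bar{a}/2$ and $b(x)>\bar{b}/2$ for every $x\in B_R^c$. On $B_R^c$ this gives at once $\sum_{x\in B_R^c}|u(x)|^p\le\frac{2}{\bar{a}}\sum_{x\in B_R^c}a(x)|u(x)|^p\le\frac{2}{\bar{a}}\|a^{\frac1p}u\|_p^p$, and similarly for the $q$-norm with $b$. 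What remains is to control the finite interior sum $\sum_{x\in B_R}|u(x)|^p$, where the weights may degenerate. Here I would exploit the finiteness of $B_R$ and the connectedness of the graph: fix a reference vertex $z_0\in B_R^c$ with $a(z_0)>\bar{a}/2$, and for each $x\in B_R$ choose a path $\gamma_x$ from $x$ to $z_0$ of length at most some $L=L(R)$. Telescoping $u(x)-u(z_0)$ along $\gamma_x$ and applying Cauchy--Schwarz gives $|u(x)|\le|u(z_0)|+L^{\frac12}\big(\sum_{e\in\gamma_x}|\nabla_e u|^2\big)^{\frac12}\le|u(z_0)|+C\|\nabla u\|_2$. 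Since $|u(z_0)|^p\le\frac{2}{\bar{a}}\|a^{\frac1p}u\|_p^p$, summing over the finitely many $x\in B_R$ yields $\sum_{x\in B_R}|u(x)|^p\le C(R,p,\bar{a})\big(\|a^{\frac1p}u\|_p^p+\|\nabla u\|_2^p\big)$. Combining the interior and exterior estimates bounds $\|u\|_p$, the same argument with $b,q$ bounds $\|u\|_q$, and together with the common gradient term this gives the lower bound.

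The main obstacle is precisely this interior estimate. Because $a$ and $b$ are only assumed nonnegative, the weighted norms carry no direct information about $u$ on the region where the weights vanish, so the values of $u$ on $B_R$ must be recovered indirectly. The device that makes this work is that $B_R$ is finite and $\mathbb{Z}^N$ is connected, so the gradient term transports control of $u$ from the exterior, where the weights are bounded below, into the interior along paths of uniformly bounded length.
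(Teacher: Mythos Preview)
Your proposal is correct. Both you and the paper split $\mathbb{Z}^N$ into a finite ball $B_R$ (chosen so that $a>\bar a/2$ and $b>\bar b/2$ on $B_R^c$) and its complement, and on $B_R^c$ the two arguments coincide. The difference lies in the interior estimate. The paper, after possibly enlarging $R$ so that $a$ and $b$ are strictly positive at some vertex of $B_R$, simply invokes the equivalence of norms on a finite-dimensional space: both $\|\cdot\|_{\mathcal{E}_{p,q}(B_R)}$ and $\|\cdot\|_{\mathcal{E}_{p(a),q(b)}(B_R)}$ are genuine norms on $C(B_{R+1})$ (the gradient part forces any function in the kernel to be constant on $B_{R+1}$, and the positive-weight vertex then forces that constant to be zero), so they are comparable by abstract linear algebra. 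Your telescoping argument along paths to an exterior reference point $z_0$ is a more explicit, constructive route to the same interior control; it produces computable constants and does not require arranging a positive-weight vertex inside $B_R$. Either approach works: the paper's is shorter, yours is more hands-on.
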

\begin{proof}

By (A1) and (A2), there exists some $R$ large enough such that, for at least one point $x\in B_{R}$, $a(x)$ and $b(x)$ are positive, and, for any $x\in B_{R}^{c}$,
$\frac{\bar{a}}{2}<a(x)<\frac{3\bar{a}}{2}\,\text{and}\,\frac{\bar{b}}{2}<b(x)<\frac{3\bar{b}}{2}.$

Since $B_R$ is a finite set in $\mathbb{V}$, we have that
\begin{equation}\label{83}
d_1\|u\|_{\mathcal{E}_{p,q}(B_R)}\leq\|u\|_{\mathcal{E}_{p(a),q(b)}(B_R)}\leq d_2\|u\|_{\mathcal{E}_{p,q}(B_R)},
\end{equation}
where $d_1$ and $d_2$ are positive constants.

For any $x\in B_{R}^{c}$, clearly, there exist two positive constants $\tilde{d}_1$ and $\tilde{d}_2$ such that
\begin{equation}\label{82}
\tilde{d}_1\|u\|_{\mathcal{E}_{p,q}(B_{R}^{c})}\leq\|u\|_{\mathcal{E}_{p(a),q(b)}(B_{R}^{c})}\leq \tilde{d}_2\|u\|_{\mathcal{E}_{p,q}(B_{R}^{c})}.
\end{equation}

In addition, one gets easily that
\begin{equation}\label{81}
\|u\|_{\mathcal{E}_{p(a),q(b)}}\leq \|u\|_{\mathcal{E}_{p(a),q(b)}(B_{R})}+\|u\|_{\mathcal{E}_{p(a),q(b)}(B_{R}^{c})}\leq 2\|u\|_{\mathcal{E}_{p(a),q(b)}}.
\end{equation}

Therefore, by (\ref{83}), (\ref{82}) and (\ref{81}), one concludes that
\begin{eqnarray*}
c_1\|u\|_{\mathcal{E}_{p,q}}\leq\|u\|_{\mathcal{E}_{p(a),q(b)}}\leq c_2\|u\|_{\mathcal{E}_{p,q}},
\end{eqnarray*}
where $c_1=\frac{1}{2}\min\{d_1,\tilde{d}_1\}$ and  $c_2=2\max\{d_2,\tilde{d}_2\}$.
\end{proof}

\begin{rem}\label{lm5}
{\rm
Lemma \ref{09} implies that the spaces $\mathcal{E}_{p,q}$ and $\mathcal{E}_{p(a),q(b)}$ are actually the same one. For simplicity, in the rest of this paper, we always write the space $\mathcal{E}_{p,q}$.
}
\end{rem}

\begin{lm}\label{lm1}
{\rm(Sobolev embedding)
For any $1\leq p\leq q<+\infty$, we have that $$\mathcal{E}_{p}\subset l^{q}(\mathbb{V}).$$

}
\end{lm}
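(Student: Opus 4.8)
The plan is to reduce the whole statement to the elementary monotonicity of discrete norms under the counting measure, which is the genuinely discrete phenomenon replacing (and in fact strengthening) the Sobolev embedding of the continuous theory. First I would record the key inequality: for any $u\in C_c(\mathbb{V})$ and $1\le p\le q<+\infty$,
\begin{eqnarray*}
\|u\|_{q}\le\|u\|_{p}.
\end{eqnarray*}
By homogeneity it suffices to treat $\|u\|_p=1$; then $|u(x)|\le 1$ for every $x\in\mathbb{V}$, so $|u(x)|^{q}\le|u(x)|^{p}$ for $q\ge p$, and summing over $\mathbb{V}$ gives $\|u\|_{q}^{q}\le\|u\|_{p}^{p}=1$. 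In particular, taking $q=\infty$ in the same way yields $\|u\|_{\infty}\le\|u\|_{p}\le\|u\|_{\mathcal{E}_{p}}$, so for each fixed vertex $x$ the evaluation functional $u\mapsto u(x)$ is continuous on $(C_c(\mathbb{V}),\|\cdot\|_{\mathcal{E}_{p}})$.

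Since $\mathcal{E}_{p}$ is defined as an abstract completion, the second step is to identify its elements with honest functions on $\mathbb{V}$. Given $u\in\mathcal{E}_{p}$, choose $\{u_{n}\}\subset C_c(\mathbb{V})$ with $u_{n}\to u$ in $\mathcal{E}_{p}$. Because each evaluation functional is continuous by the bound above, $\{u_{n}(x)\}$ is Cauchy in $\R$ for every $x$, and I would define $u(x):=\lim_{n}u_{n}(x)$; this limit is independent of the approximating sequence (two sequences with the same $\mathcal{E}_{p}$-limit agree pointwise), so $\mathcal{E}_{p}$ is realized as a space of functions on $\mathbb{V}$ and convergence in $\mathcal{E}_{p}$ forces pointwise convergence.

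Finally I would pass the inequality $\|u_{n}\|_{q}\le\|u_{n}\|_{p}$ to the limit. From $\|\cdot\|_{p}\le\|\cdot\|_{\mathcal{E}_{p}}$ we get $u_{n}\to u$ in $l^{p}$, hence $\|u_{n}\|_{p}\to\|u\|_{p}$. Combining pointwise convergence $u_{n}(x)\to u(x)$ with Fatou's lemma for the counting measure,
\begin{eqnarray*}
\|u\|_{q}^{q}=\sum_{x\in\mathbb{V}}|u(x)|^{q}\le\liminf_{n\to\infty}\sum_{x\in\mathbb{V}}|u_{n}(x)|^{q}=\liminf_{n\to\infty}\|u_{n}\|_{q}^{q}\le\liminf_{n\to\infty}\|u_{n}\|_{p}^{q}=\|u\|_{p}^{q}<+\infty,
\end{eqnarray*}
so $u\in l^{q}(\mathbb{V})$ with $\|u\|_{q}\le\|u\|_{p}\le\|u\|_{\mathcal{E}_{p}}$, which is exactly the claimed embedding. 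The analytic content here is slight; the only point that requires care is the second step, the identification of the abstract completion with a pointwise-defined function space, and I expect that bookkeeping to be the main (though routine) obstacle, since everything downstream relies on interpreting $u\in\mathcal{E}_{p}$ as a concrete function to which Fatou applies.
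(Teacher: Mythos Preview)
Your proof is correct and follows essentially the same route as the paper: both rest on the elementary inclusion $l^{p}(\mathbb{V})\subset l^{q}(\mathbb{V})$ for $p\le q$ under the counting measure, combined with the trivial inclusion $\mathcal{E}_{p}\subset l^{p}(\mathbb{V})$. The paper's proof is two lines and takes both facts for granted; you supply the extra bookkeeping---the proof of $\|u\|_{q}\le\|u\|_{p}$ and the identification of the abstract completion $\mathcal{E}_{p}$ with a space of pointwise-defined functions---that the paper suppresses as ``clearly.''
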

\begin{proof}
Clearly,  $\mathcal{E}_{p}\subset l^{p}(\mathbb{V}).$  In addition, we have that $l^{p}(\mathbb{V})\subset l^{q}(\mathbb{V})$ with $p< q$.
Therefore, for any $1\leq p\leq q<+\infty$, we get that $\mathcal{E}_{p}\subset l^{q}(\mathbb{V}).$

\end{proof}

We recall a well-known Br\'{e}zis-Lieb lemma \cite{BL} considered in a measure space $(\Omega, \Sigma, \mu)$, where the measure space $(\Omega, \Sigma, \mu)$ is made up of a set $\Omega$ equipped with a $\sigma-$algebra $\Sigma$ and a Borel measure $\mu:\Sigma\rightarrow[0,+\infty]$.
\begin{lm}\label{i}
{\rm(Br\'{e}zis-Lieb lemma)
Let $(\Omega, \Sigma, \mu)$ be a measure space and $\{u_n\}\subset L^{p}(\Omega, \Sigma, \mu)$ with $0<p<+\infty$. If $\{u_n\}$ is uniformly bounded in $L^{p}(\Omega)$
and $u_n\rightarrow u, \mu-$almost everywhere in $\Omega$, then we have that
\begin{eqnarray*}
\underset{n\rightarrow+\infty}{\lim}(\|u_n\|^{p}_{L^{p}(\Omega)}-\|u_n-u\|^{p}_{L^{p}(\Omega)})=\|u\|^{p}_{L^{p}(\Omega)}.
\end{eqnarray*}

}
\end{lm}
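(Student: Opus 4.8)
The plan is to prove the Br\'{e}zis-Lieb lemma in the stated general measure-space form, following the classical argument of Br\'{e}zis and Lieb based on a pointwise algebraic inequality combined with the dominated convergence theorem. First I would reduce the statement to controlling the cross terms: the quantity $\|u_n\|_p^p - \|u_n-u\|_p^p$ is an integral of $|u_n|^p - |u_n-u|^p$, and the goal is to show this integral converges to $\int |u|^p\,d\mu$. The natural strategy is to isolate the ``error'' integrand
\begin{eqnarray*}
W_n^\varepsilon(x) := \Big( \big| |u_n|^p - |u_n-u|^p - |u|^p \big| - \varepsilon |u_n - u|^p \Big)^+,
\end{eqnarray*}
and to prove that $\int_\Omega W_n^\varepsilon \, d\mu \to 0$ as $n\to\infty$ for each fixed $\varepsilon>0$; from there the conclusion follows by letting $\varepsilon\to 0$ using the uniform $L^p$ bound on $\{u_n-u\}$.

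The key algebraic input I would establish is the elementary inequality: for every $\varepsilon>0$ there is a constant $C_\varepsilon>0$ such that for all real (or complex) numbers $a,b$,
\begin{eqnarray*}
\big| |a+b|^p - |a|^p \big| \leq \varepsilon |a|^p + C_\varepsilon |b|^p.
\end{eqnarray*}
Applying this with $a = u_n - u$ and $b = u$ gives the pointwise bound
\begin{eqnarray*}
\big| |u_n|^p - |u_n-u|^p \big| \leq \varepsilon |u_n-u|^p + C_\varepsilon |u|^p,
\end{eqnarray*}
and hence $W_n^\varepsilon \leq (\,\big| |u_n|^p - |u_n-u|^p \big| - \varepsilon|u_n-u|^p\,)^+ + |u|^p \leq (1+C_\varepsilon)|u|^p$, which is an integrable majorant independent of $n$. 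Since $u_n \to u$ pointwise $\mu$-a.e., we have $W_n^\varepsilon \to 0$ pointwise $\mu$-a.e., so the dominated convergence theorem yields $\int_\Omega W_n^\varepsilon\,d\mu \to 0$.

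To finish, I would write, for each $n$,
\begin{eqnarray*}
\big| |u_n|^p - |u_n-u|^p - |u|^p \big| \leq W_n^\varepsilon + \varepsilon |u_n-u|^p,
\end{eqnarray*}
integrate over $\Omega$, and use the uniform bound $\sup_n \|u_n-u\|_p^p =: M < \infty$ (which follows from the uniform bound on $\{u_n\}$ together with $u \in L^p$, the latter being a consequence of Fatou's lemma applied to $|u_n|^p$). This gives
\begin{eqnarray*}
\limsup_{n\to\infty} \Big| \, \|u_n\|_p^p - \|u_n-u\|_p^p - \|u\|_p^p \, \Big| \leq \varepsilon M,
\end{eqnarray*}
and letting $\varepsilon \to 0$ completes the proof. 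I expect the main obstacle to be the careful verification of the algebraic inequality for $b$ fixed and $a$ arbitrary, together with making sure the constant $C_\varepsilon$ is uniform in $a,b$; once that inequality is in hand, the measure-theoretic steps (extracting the integrable dominating function and invoking dominated convergence) are routine, and notably the argument requires no assumption beyond uniform $L^p$-boundedness and a.e.\ convergence, so it applies verbatim to the counting measure on $\mathbb{Z}^N$ that we use later.
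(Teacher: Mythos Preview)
Your proposal is correct and is precisely the classical Br\'{e}zis--Lieb argument. Note, however, that the paper does not actually prove this lemma: it is stated as a known result with a citation to the original paper \cite{BL}, so there is no ``paper's own proof'' to compare against. Your write-up reproduces the standard proof from that reference and is fine as a self-contained justification.
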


At the end of this section, we give a discrete version of the concentration compactness principle corresponding to Lions \cite{L1} on $\mathbb{R}^{N}$

\begin{lm}\label{l-0}
{\rm(Lions lemma)
Let $1\leq p<+\infty$. Assume that $\{u_n\}$ is bounded in $l^{p}(\mathbb{V})$ and
\begin{equation}\label{1-4}
\|u_{n}\|_{\infty}\rightarrow0,\qquad n\rightarrow+\infty.
\end{equation}
Then for any $p<q<+\infty$, as $n\rightarrow+\infty,$ $$u_n\rightarrow0,\qquad \text{in~} l^{q}(\mathbb{V}).$$

}
\end{lm}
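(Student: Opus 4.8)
The plan is to deduce the conclusion from an elementary interpolation inequality on $l^p$ spaces, which in the discrete setting completely replaces the delicate covering/dichotomy argument behind Lions' concentration compactness principle on $\R^N$. The underlying point is that on $\mathbb{V}$ the counting measure has no small scales, so that vanishing of the sup norm, combined with a uniform $l^p$ bound, automatically forces decay in every higher $l^q$ norm; there is no mass that can ``escape to small scales'' as in the continuous case.

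First I would use the boundedness hypothesis to fix a constant $M>0$ with $\|u_n\|_p\le M$ for all $n$. The key step is then a pointwise splitting: for $q>p$ and each $x\in\mathbb{V}$,
\[
|u_n(x)|^q=|u_n(x)|^{q-p}\,|u_n(x)|^p\le \|u_n\|_\infty^{q-p}\,|u_n(x)|^p.
\]
Summing over $x\in\mathbb{V}$ and invoking the definitions of the norms yields
\[
\|u_n\|_q^q\le \|u_n\|_\infty^{q-p}\sum_{x\in\mathbb{V}}|u_n(x)|^p=\|u_n\|_\infty^{q-p}\,\|u_n\|_p^p\le M^p\,\|u_n\|_\infty^{q-p}.
\]
Since $q-p>0$, the hypothesis \eqref{1-4} gives $\|u_n\|_\infty^{q-p}\to 0$, whence $\|u_n\|_q^q\to 0$ and therefore $u_n\to 0$ in $l^q(\mathbb{V})$ as $n\to+\infty$.

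I do not expect a genuine obstacle here: the discreteness of $\mathbb{V}$ trivializes the vanishing/concentration dichotomy that makes the Euclidean statement subtle, and the whole argument rests only on the pointwise bound $|u_n(x)|^{q-p}\le\|u_n\|_\infty^{q-p}$ together with $q>p$. The one thing worth checking is that the exponent $q-p$ is strictly positive, which is exactly the assumption $p<q$; when $q=p$ the inequality degenerates and the conclusion genuinely fails. This is also the structural reason, noted in Remark \ref{4}(i), that no upper (critical) bound on $q$ is required on $\mathbb{Z}^N$, in contrast to the subcritical restriction $q<2^*$ in the continuous setting.
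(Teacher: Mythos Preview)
Your proof is correct and is essentially the same as the paper's: the paper also just invokes the interpolation inequality $\|u_n\|_q^q\le\|u_n\|_p^p\,\|u_n\|_\infty^{q-p}$ and concludes immediately from the hypotheses. You simply spell out the one-line pointwise estimate behind that inequality.
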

\begin{proof}
For $p<q<+\infty$, by interpolation inequality, we get that
\begin{equation}\label{1-6}
\|u_n\|^{q}_{q}\leq\|u_n\|_{p}^{p}\|u_n\|_{\infty}^{q-p}.
\end{equation}
Then the result follows from (\ref{1-4}) and (\ref{1-6}).
\end{proof}

\section{Behavior of Palais-Smale sequences }
In this section, we give some crucial lemmas for the functionals $\Phi$ and $\bar{\Phi}$, which play key roles in the proofs of our second theorem.

\begin{lm}\label{l-1}
{\rm
Assume that (A1) and (A2) hold. Let $\{u_n\}\subset\mathcal{E}_{p,q}$ be a Palais-Smale sequence of the functional $\Phi$. Passing to a subsequence if necessary, there exists some $u\in\mathcal{E}_{p,q}$ such that
\begin{itemize}
\item[(i)] $u_n\rightharpoonup u,\qquad \text{in}~\mathcal{E}_{p,q}$;\\
\item[(ii)] $u_n\rightarrow u,\qquad \text{pointwise~in}~\mathbb{V}$;\\
\item[(iii)] $\Phi'(u)=0,\qquad \text{in}~\mathcal{E}^{*}_{p,q}$.
\end{itemize}
}
\end{lm}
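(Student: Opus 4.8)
The plan is to deduce everything from boundedness of the sequence, so I would first show that any Palais--Smale sequence of $\Phi$ is automatically bounded in $\mathcal{E}_{p,q}$. The device is the standard coercive combination: since the $b$-terms cancel,
\[ \Phi(u_n)-\tfrac{1}{q}\langle\Phi'(u_n),u_n\rangle=\Big(\tfrac12-\tfrac1q\Big)\|\nabla u_n\|_2^2+\Big(\tfrac1p-\tfrac1q\Big)\int_{\mathbb{V}}a\,|u_n|^p\,d\mu, \]
and both coefficients are strictly positive because $2\le p<q$. The left-hand side is at most $|c|+1+o(1)\|u_n\|_{\mathcal{E}_{p,q}}$ for large $n$, using $\Phi(u_n)\to c$ and $|\langle\Phi'(u_n),u_n\rangle|\le\|\Phi'(u_n)\|_{*}\|u_n\|_{\mathcal{E}_{p,q}}=o(1)\|u_n\|_{\mathcal{E}_{p,q}}$. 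This controls $\|\nabla u_n\|_2$ and $\int a|u_n|^p$. To recover the full norm I would invoke the discrete embedding $\|u\|_q\le\|u\|_p$ (a special case of Lemma~\ref{lm1}), so that $\|u_n\|_{\mathcal{E}_{p,q}}\le\|\nabla u_n\|_2+2\|u_n\|_p$, together with (A1), which gives $a\ge\bar a/2$ outside a finite ball $B_R$; the values of $u_n$ on the finite set $B_R$ (where $a$ may vanish) are then controlled by connectedness of $\mathbb{Z}^N$ and the gradient bound. A normalization argument (assume $t_n:=\|u_n\|_{\mathcal{E}_{p,q}}\to\infty$, pass to $v_n=u_n/t_n$, and conclude $\|v_n\|_{\mathcal{E}_{p,q}}\to0$, a contradiction) then yields boundedness.

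Once boundedness is in hand, (i) and (ii) are immediate. Since $p>1$, the space $\mathcal{E}_{p,q}$ is reflexive, so a subsequence satisfies $u_n\rightharpoonup u$ for some $u\in\mathcal{E}_{p,q}$; this is (i). For each fixed $x\in\mathbb{V}$ the evaluation map $\delta_x:v\mapsto v(x)$ is a bounded linear functional on $\mathcal{E}_{p,q}$, because $|v(x)|\le\|v\|_\infty\le\|v\|_p\le\|v\|_{\mathcal{E}_{p,q}}$; hence $u_n(x)=\langle\delta_x,u_n\rangle\to\langle\delta_x,u\rangle=u(x)$ for every $x$, which is (ii).

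For (iii), fix $\phi\in C_c(\mathbb{V})$. From $\Phi'(u_n)\to0$ in $\mathcal{E}^{*}_{p,q}$ we get $\langle\Phi'(u_n),\phi\rangle\to0$. Each of the three terms defining $\langle\Phi'(u_n),\phi\rangle$ is a finite sum, supported on $\mathrm{supp}(\phi)$ and its neighbors, so the pointwise convergence from (ii) together with continuity of $t\mapsto|t|^{r-2}t$ lets me pass to the limit termwise and conclude $\langle\Phi'(u_n),\phi\rangle\to\langle\Phi'(u),\phi\rangle$. Therefore $\langle\Phi'(u),\phi\rangle=0$ for all $\phi\in C_c(\mathbb{V})$, and by density of $C_c(\mathbb{V})$ in $\mathcal{E}_{p,q}$ together with continuity of $\Phi'(u)$ this gives $\Phi'(u)=0$ in $\mathcal{E}^{*}_{p,q}$.

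The step I expect to be the main obstacle is boundedness: the coercive combination only controls the gradient and the $a$-weighted $p$-norm, so recovering the full $\mathcal{E}_{p,q}$ norm requires the discrete embedding $l^p\subset l^q$ for the $q$-part and a connectedness argument to handle the finitely many vertices where $a$ may vanish. By contrast, passing to the limit in the nonlinear term --- the delicate point in the continuous setting --- is painless here, since finitely supported test functions reduce every term to a finite sum on which pointwise convergence suffices.
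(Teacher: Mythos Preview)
Your proposal is correct and follows essentially the same route as the paper: the coercive combination $\Phi(u_n)-\theta\langle\Phi'(u_n),u_n\rangle$ for boundedness, reflexivity for the weak limit, and passage to the limit in $\langle\Phi'(u_n),\phi\rangle$ using that finitely supported $\phi$ reduce everything to finite sums.

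Two minor comparisons are worth recording. First, the paper streamlines your boundedness step: rather than carrying out the connectedness-plus-normalization argument to control the vertices where $a$ may vanish, it invokes the norm equivalence $\|\cdot\|_{\mathcal{E}_{p,q}}\sim\|\cdot\|_{\mathcal{E}_{p(a),q(b)}}$ (Lemma~\ref{09}), which already packages that argument; one can then bound $\|u_n\|_{\mathcal{E}_{p(a),q(b)}}\le C(\|\nabla u_n\|_2+\|a^{1/p}u_n\|_p)$ directly using $\|u\|_q\le\|u\|_p$, and the coercive inequality closes without any contradiction argument. Second, your proof of (ii) via the bounded linear functionals $\delta_x$ is actually slightly cleaner than the paper's, which obtains pointwise convergence by a diagonal extraction from the $l^\infty$ bound; your observation shows that no further subsequence beyond the weakly convergent one is needed.
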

\begin{proof}
(i)
Suppose that $\{u_n\}$ is a Palais-Smale sequence of the functional $\Phi$ at level $c$, that is, as $n\rightarrow+\infty$,
\begin{equation}\label{1-9}
\Phi(u_n)\rightarrow c\qquad \text{and} \qquad\Phi'(u_n)\rightarrow 0, \qquad \text{in~}\mathcal{E}^{*}_{p,q}.
\end{equation}
For $2\leq p<q<+\infty$, let $\theta$ be some constant satisfying $\frac{1}{q}<\theta<\frac{1}{p}$. Then it follows from (\ref{1-9}) that
\begin{eqnarray*}
(\frac{1}{2}-\theta)\|\nabla u_n\|^{2}_2+(\frac{1}{p}-\theta)\|a^{\frac{1}{p}}u_n\|^{p}_p&\leq&(\frac{1}{2}-\theta)\|\nabla u_n\|^{2}_2+(\frac{1}{p}-\theta)\|a^{\frac{1}{p}}u_n\|^{p}_p+(\theta-\frac{1}{q})\|b^{\frac{1}{q}}u_n\|^{q}_q
\\&=&\Phi(u_n)-\theta\langle\Phi'(u_n),u_n\rangle\\&\leq& c+1+\|\Phi'(u_n)\|_{\mathcal{E}^{*}_{p(a),q(b)}}\|u_n\|_{\mathcal{E}_{p(a),q(b)}}\\&\leq& c+1+C\|\Phi'(u_n)\|_{\mathcal{E}^{*}_{p(a),q(b)}}(\|\nabla u_n\|_2+\|a^{\frac{1}{p}}u_n\|_p),
\end{eqnarray*}
where we have used the fact $\|u\|_{q}\leq\|u\|_{p}$ with $p<q$ in the forth inequality. Consequently, we have that $\|u_n\|_{\mathcal{E}_{p(a)}}\leq C$. By the Sobolev embedding, we get that
$\|u_n\|_{\mathcal{E}_{p(a),q(b)}}\leq C.$ Therefore, passing to a subsequence if necessary, there exists some $u\in\mathcal{E}_{p,q}$ such that
\begin{eqnarray*}
u_n\rightharpoonup u,\qquad \text{in}~\mathcal{E}_{p,q}.
\end{eqnarray*}

(ii)
From the proof of (i), we see that $\{u_n\}$ is bounded in $l^{p}(\mathbb{V})$, and hence bounded in $l^{\infty}(\mathbb{V})$. Therefore, by diagonal principle, there exists a subsequence of $\{u_n\}$  pointwise converging to $u$.

(iii) It is sufficient to show that for any $\phi\in C_{c}(\mathbb{V})$, $\langle\Phi'(u),\phi\rangle=0.$

For any $\phi\in C_{c}(\mathbb{V})$, assume that $\text{supp}(\phi)\subseteq B_{R}$, where $R$ is a positive constant. Since $B_{R+1}$ is a finite set in $\mathbb{V}$ and $u_n\rightarrow u $ pointwise in $\mathbb{V}$ as $n\rightarrow+\infty$, we get that
\begin{eqnarray*}
\langle\Phi'(u_n),\phi\rangle-\langle\Phi'(u),\phi\rangle&=&
\frac{1}{2}\sum\limits_{x\in B_{R+1}}\sum\limits_{y\sim x}[(u_n-u)(y)-(u_n-u)(x)](\phi(y)-\phi(x))\\&&+\sum\limits_{x\in B_{R}}a(x)(|u_n(x)|^{p-2}u_{n}(x)-|u(x)|^{p-2}u(x))\phi(x)\\&&-\sum\limits_{x\in B_{R}}b(x)(|u_n(x)|^{q-2}u_{n}(x)-|u(x)|^{q-2}u(x))\phi(x)
\\&\rightarrow& 0, \qquad  n\rightarrow+\infty.
\end{eqnarray*}
Therefore, one concludes that
\begin{eqnarray*}
|\langle\Phi'(u),\phi\rangle|=\underset {n\rightarrow+\infty}{\lim}|\langle\Phi'(u_n),\phi\rangle|\leq\underset {n\rightarrow+\infty}{\lim}
\|\Phi'(u_n)\|_{\mathcal{E}^{*}_{p(a),q(b)}}\|\phi\|_{\mathcal{E}_{p(a),q(b)}}=0.
\end{eqnarray*}

\end{proof}

\begin{crl}\label{l-2}
{\rm
Let $\{u_n\}$ be a Palais-Smale sequence of the functional $\bar{\Phi}$. Passing to a subsequence if necessary, there exists some $u\in\mathcal{E}_{p,q}$ such that
\begin{itemize}
\item[(i)] $u_n\rightharpoonup u,\qquad \text{in}~\mathcal{E}_{p,q}$;\\
\item[(ii)] $u_n\rightarrow u,\qquad \text{pointwise~in}~\mathbb{V}$;\\
\item[(iii)] $\bar{\Phi}'(u)=0,\qquad \text{in}~\mathcal{E}^{*}_{p,q}$.
\end{itemize}
}
\end{crl}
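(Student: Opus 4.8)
The plan is to deduce this directly from Lemma \ref{l-1}. The key observation is that $\bar{\Phi}$ is nothing but the functional $\Phi$ in the special case where the variable coefficients are taken to be the constants $a(x)\equiv\bar{a}$ and $b(x)\equiv\bar{b}$. Since $\bar{a},\bar{b}>0$, these constant functions are nonnegative and satisfy
\begin{eqnarray*}
\lim_{|x|\to+\infty}a(x)=\bar{a}>0,\qquad \lim_{|x|\to+\infty}b(x)=\bar{b}>0,
\end{eqnarray*}
so that hypotheses (A1) and (A2) hold trivially for this choice. Applying Lemma \ref{l-1} to $\Phi$ with these coefficients yields at once all three conclusions, with $\Phi$ and $\Phi'$ replaced by $\bar{\Phi}$ and $\bar{\Phi}'$.

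Alternatively, one may repeat the proof of Lemma \ref{l-1} verbatim, replacing $a(x)$ and $b(x)$ throughout by $\bar{a}$ and $\bar{b}$. I would first use the Palais-Smale condition together with the parameter $\theta\in(\frac{1}{q},\frac{1}{p})$ to bound $\|\nabla u_n\|_2$ and $\|u_n\|_p$, obtaining boundedness in $\mathcal{E}_p$ and hence, by the Sobolev embedding of Lemma \ref{lm1}, in $\mathcal{E}_{p,q}$; reflexivity then gives (i). Boundedness in $l^p(\mathbb{V})$, hence in $l^\infty(\mathbb{V})$, together with a diagonal argument yields the pointwise limit in (ii). Finally, for any $\phi\in C_c(\mathbb{V})$ with support in some ball $B_R$, the difference $\langle\bar{\Phi}'(u_n),\phi\rangle-\langle\bar{\Phi}'(u),\phi\rangle$ is a finite sum over $B_{R+1}$, which tends to $0$ by pointwise convergence, giving $\bar{\Phi}'(u)=0$ as in (iii).

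Since this is a direct specialization, there is no genuine obstacle: the only point to verify is that constant coefficients meet the standing assumptions, which is immediate from $\bar{a},\bar{b}>0$. I note in particular that the norm-equivalence Lemma \ref{09} becomes trivial here, since the weighted norm $\|\cdot\|_{\mathcal{E}_{p(\bar{a}),q(\bar{b})}}$ differs from $\|\cdot\|_{\mathcal{E}_{p,q}}$ only by the fixed factors $\bar{a}^{1/p}$ and $\bar{b}^{1/q}$. Consequently every estimate in the proof of Lemma \ref{l-1} passes through without modification, and stating the result as a corollary is justified.
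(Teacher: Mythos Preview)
Your proposal is correct and matches the paper's approach exactly: the paper states this result as a corollary of Lemma~\ref{l-1} with no separate proof, precisely because $\bar{\Phi}$ is the special case of $\Phi$ with constant coefficients $a(x)\equiv\bar{a}$, $b(x)\equiv\bar{b}$, which trivially satisfy (A1) and (A2). Your observation that the norm equivalence of Lemma~\ref{09} becomes trivial in this case is also apt.
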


\begin{lm}\label{l-3}
{\rm
Assume that (A1) and (A2) hold. Let $\{u_n\}\subset\mathcal{E}_{p,q}$ be a Palais-Smale sequence of the functional $\Phi$. Passing to a subsequence if necessary, there exists some $u\in\mathcal{E}_{p,q}$ such that
\begin{itemize}
\item[(i)] $\underset{n\rightarrow+\infty}{\lim}(\Phi(u_n)-\Phi(u_n-u))=\Phi(u)$;\\
\item[(ii)] $\underset{n\rightarrow+\infty}{\lim}\Phi'(u_n-u)=0,\qquad \text{in}~\mathcal{E}^{*}_{p,q}$.
\end{itemize}
}
\end{lm}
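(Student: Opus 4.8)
The plan is to treat the two assertions separately, reducing each to a Br\'ezis--Lieb type splitting carried out term by term. Throughout I use Lemma~\ref{l-1}: passing to a subsequence, $u_n\rightharpoonup u$ in $\mathcal{E}_{p,q}$, $u_n\to u$ pointwise on $\mathbb{V}$, and $\Phi'(u)=0$; moreover the proof of Lemma~\ref{l-1} shows $\{u_n\}$ is bounded in $\mathcal{E}_{p,q}$, hence $\{\nabla u_n\}$ is bounded in $l^2(\mathbb{E})$ and $\{u_n\}$ in $l^p(\mathbb{V})\cap l^q(\mathbb{V})$. Note also that $a,b\geq 0$ are bounded on $\mathbb{V}$ by (A1), (A2).

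For (i), I would apply the Br\'ezis--Lieb lemma (Lemma~\ref{i}) to each of the three terms of $\Phi$. For the Dirichlet term, view $\nabla u_n$ as a function on the edge set with its natural measure; since $\nabla$ is linear, $\nabla u_n\to\nabla u$ pointwise on $\mathbb{E}$ and $\{\nabla u_n\}$ is bounded in $l^2(\mathbb{E})$, so Lemma~\ref{i} with exponent $2$ gives $\|\nabla u_n\|_2^2-\|\nabla(u_n-u)\|_2^2\to\|\nabla u\|_2^2$, using $\nabla u_n-\nabla u=\nabla(u_n-u)$. For the potential terms I apply Lemma~\ref{i} to $a^{1/p}u_n$ (exponent $p$) and $b^{1/q}u_n$ (exponent $q$): these converge pointwise to $a^{1/p}u$, $b^{1/q}u$ and are bounded in $l^p$, $l^q$, yielding $\int a|u_n|^p-\int a|u_n-u|^p\to\int a|u|^p$ and likewise for $b$. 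Combining the three limits with the coefficients $\tfrac12,\tfrac1p,-\tfrac1q$ gives exactly $\Phi(u_n)-\Phi(u_n-u)\to\Phi(u)$.

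For (ii), the clean reduction is to write, for $\phi\in C_c(\mathbb{V})$, $\langle\Phi'(u_n-u),\phi\rangle=\big[\langle\Phi'(u_n),\phi\rangle-\langle\Phi'(u),\phi\rangle\big]+A_n(\phi)-B_n(\phi)$, where the Dirichlet part cancels exactly by linearity and the remainders are $A_n(\phi)=\int a\,g_n^{(p)}\phi$, $B_n(\phi)=\int b\,g_n^{(q)}\phi$ with $g_n^{(r)}=|u_n-u|^{r-2}(u_n-u)-|u_n|^{r-2}u_n+|u|^{r-2}u$ for $r\in\{p,q\}$. Since $\Phi'(u_n)\to 0$ in $\mathcal{E}^{*}_{p,q}$ and $\Phi'(u)=0$ (Lemma~\ref{l-1}), it suffices to show $\sup_{\|\phi\|_{\mathcal{E}_{p,q}}\le 1}|A_n(\phi)|\to 0$ and similarly for $B_n$. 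By boundedness of $a$ and H\"older, $|A_n(\phi)|\le C\|g_n^{(p)}\|_{p'}\|\phi\|_p$ with $p'=p/(p-1)$, and $\|\phi\|_p\le\|\phi\|_{\mathcal{E}_{p,q}}$; so everything reduces to the discrete Br\'ezis--Lieb lemma for the Nemytskii operator, namely $\|g_n^{(p)}\|_{p'}\to 0$ and $\|g_n^{(q)}\|_{q'}\to 0$.

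I expect this last estimate to be the main obstacle, since it is not covered by Lemma~\ref{i}. To prove $\|g_n^{(r)}\|_{r'}\to 0$ for $r\ge 2$, I would first record the elementary inequality: for every $\varepsilon>0$ there is $C_\varepsilon>0$ with $\big||s+t|^{r-2}(s+t)-|s|^{r-2}s\big|^{r'}\le\varepsilon|s|^r+C_\varepsilon|t|^r$ for all $s,t\in\R$, proved by homogeneity after setting $\tau=t/s$ and using continuity at $\tau=0$. Writing $v_n=u_n-u\to 0$ pointwise and bounded in $l^r$, and setting $W_n=|v_n+u|^{r-2}(v_n+u)-|v_n|^{r-2}v_n-|u|^{r-2}u=-g_n^{(r)}$, the inequality bounds $|W_n|^{r'}\le 2^{r'-1}\varepsilon|v_n|^r+C'_\varepsilon|u|^r$. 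Then the truncation $\big(|W_n|^{r'}-2^{r'-1}\varepsilon|v_n|^r\big)^{+}$ is dominated by $C'_\varepsilon|u|^r\in l^1$ and tends to $0$ pointwise, so dominated convergence together with the uniform $l^r$ bound on $v_n$ gives $\limsup_n\|W_n\|_{r'}^{r'}\le C\varepsilon$; letting $\varepsilon\to 0$ finishes it. When $r=2$ the map is linear and $g_n^{(r)}\equiv 0$, so the argument is only needed for the $q$-term, and for the $p$-term when $p>2$. Feeding $\|g_n^{(p)}\|_{p'},\|g_n^{(q)}\|_{q'}\to 0$ back into the H\"older bounds then yields (ii).
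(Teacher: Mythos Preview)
Your argument for (i) is correct and coincides with the paper's: both apply the Br\'ezis--Lieb lemma term by term, treating the gradient part as an $l^2$ function on the (directed) edge set.

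For (ii) your route is correct but genuinely different from the paper's. The paper argues \emph{locally}: for $\phi\in C_c(\mathbb{V})$ with $\mathrm{supp}(\phi)\subset B_R$ it bounds $|\langle\Phi'(u_n-u),\phi\rangle|$ by local norms $\|\nabla(u_n-u)\|_{l^2(B_{R+1})}$, $\|a^{1/p}(u_n-u)\|_{l^p(B_R)}^{p-1}$, $\|b^{1/q}(u_n-u)\|_{l^q(B_R)}^{q-1}$ times $\|\phi\|_{\mathcal{E}_{p(a),q(b)}}$, and uses that $B_{R+1}$ is a finite set together with pointwise convergence to make these local norms vanish. You instead write $\Phi'(u_n-u)=\Phi'(u_n)-\Phi'(u)+(\text{Nemytskii remainders})$ and prove the derivative Br\'ezis--Lieb statement $\big\||u_n-u|^{r-2}(u_n-u)-|u_n|^{r-2}u_n+|u|^{r-2}u\big\|_{r'}\to 0$ for $r\in\{p,q\}$ via the $\varepsilon$--$C_\varepsilon$ inequality and dominated convergence. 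Your approach is the standard one from the continuous setting and buys you a bound that is manifestly uniform in $\phi$, so the dual-norm convergence follows without further comment; the paper's shortcut leans on the discreteness (balls are finite sets), but as written its vanishing quantity $\xi_n$ depends on $R=R(\phi)$, so the passage to the supremum over $\|\phi\|=1$ is not immediate. In that sense your argument is slightly longer but cleaner.
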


\begin{proof}
It follows from Lemma \ref{l-1} that
\begin{eqnarray*}
\|u_n\|_{\mathcal{E}_{p(a),q(b)}}\leq C\qquad \text{and}\qquad u_n\rightarrow u,\qquad \text{pointwise~in}~\mathbb{V}.
\end{eqnarray*}

(i)
By the Br\'{e}zis-Lieb lemma, we have that
\begin{equation}\label{88}
\|a^{\frac{1}{p}}u_n\|^{p}_p-\|a^{\frac{1}{p}}(u_n-u)\|^{p}_p=\|a^{\frac{1}{p}}u\|^{p}_p+o(1),
\end{equation}
\begin{equation}\label{87}
\|b^{\frac{1}{q}}u_n\|^{q}_q-\|b^{\frac{1}{q}}(u_n-u)\|^{q}_q=\|b^{\frac{1}{q}}u\|^{q}_q+o(1).
\end{equation}
We claim that
\begin{equation}\label{86}
\|\nabla u_n\|^{2}_2-\|\nabla (u_n-u)\|^{2}_2=\|\nabla u\|^{2}_2+o(1).
\end{equation}
In fact, let $\overline{\mathbb{E}}=\{e=(e_-,e_+):e_\pm\in \mathbb{V},\,e_-\sim e_+\}$ be a directed edge set. Then we define a function $\bar{u}:\overline{\mathbb{E}}\rightarrow\mathbb{R},\,\bar{u}(e)=u(e_+)-u(e_-)$. One can get easily that
\begin{eqnarray*}
\|\nabla u_n\|^{2}_2=\frac{1}{2}\|\bar{u}_n\|^{2}_{l^2(\overline{\mathbb{E}})}<+\infty\qquad\text{and}\qquad\bar{u}_n\rightarrow\bar{u}
\qquad\text{pointwise~in~}\overline{\mathbb{E}}.
\end{eqnarray*}
Then by the Br\'{e}zis-Lieb lemma, we obtain that $\underset{n\rightarrow+\infty}{\lim}(\|\bar{u}_n\|^{2}_{L^{2}(\overline{\mathbb{E}})}-
\|\bar{u}_n-\bar{u}\|^{2}_{L^{2}(\overline{\mathbb{E}})})=\|\bar{u}\|^{2}_{L^{2}(\overline{\mathbb{E}})},$ which means that (\ref{86}). Hence, we complete the claim.

Therefore, by (\ref{88}), (\ref{87}) and (\ref{86}), we obtain that $\underset{n\rightarrow+\infty}{\lim}(\Phi(u_n)-\Phi(u_n-u))=\Phi(u).$

(ii) For any $\phi\in C_{c}(\mathbb{V})$, assume that $\text{supp}(\phi)\subseteq B_{R}$, where $R$ is a positive constant. Since $B_{R+1}$ is a finite set in $\mathbb{V}$ and $u_n\rightarrow u $ pointwise in $\mathbb{V}$ as $n\rightarrow+\infty$, we get that
\begin{eqnarray*}
|\langle\Phi'(u_n-u),\phi\rangle|&\leq&
\sum\limits_{x\in B_{R+1}}|\nabla(u_n-u)||\nabla\phi|+\sum\limits_{x\in B_{R}}a(x)|u_n-u|^{p-1}|\phi|\\&&+\sum\limits_{x\in B_{R}}b(x)|u_n-u|^{q-1}|\phi|\\&\leq&\|\nabla(u_n-u)\|_{l^{2}(B_{R+1})}\|\nabla\phi\|_{2}+\|a^{\frac{1}{p}}(u_n-u)\|^{p-1}_{l^{p}(B_R)}
\|a^{\frac{1}{p}}\phi\|_{p}\\&&+\|b^{\frac{1}{q}}(u_n-u)\|^{q-1}_{l^{q}(B_R)}
\|b^{\frac{1}{q}}\phi\|_{q}\\&\leq&C\xi_{n}\|\phi\|_{\mathcal{E}_{p(a),q(b)}},
\end{eqnarray*}
where $C$ is a constant not depending on $n$ and $\xi_{n}\rightarrow 0$ as $n\rightarrow+\infty$.
Therefore, we get that
$$\underset{n\rightarrow+\infty}{\lim}\|\Phi'(u_n-u)\|_{\mathcal{E}^{*}_{p(a),q(b)}}=\underset{n\rightarrow+\infty}{\lim}
\underset{\|\phi\|_{\mathcal{E}_{p(a),q(b)}}=1}{\sup}|\langle\Phi'(u_n-u),\phi\rangle|=0.$$

\end{proof}


\begin{crl}\label{c0}
{\rm
Let $\{u_n\}\subset\mathcal{E}_{p,q}$ be a Palais-Smale sequence of the functional $\bar{\Phi}$. Passing to a subsequence if necessary, there exists some $u\in\mathcal{E}_{p,q}$ such that
\begin{itemize}
\item[(i)] $\underset{n\rightarrow+\infty}{\lim}(\bar{\Phi}(u_n)-\bar{\Phi}(u_n-u))=\bar{\Phi}(u)$;\\
\item[(ii)] $\underset{n\rightarrow+\infty}{\lim}\bar{\Phi}'(u_n-u)=0,\qquad \text{in}~\mathcal{E}^{*}_{p,q}$.
\end{itemize}
}
\end{crl}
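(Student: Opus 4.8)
The plan is to recognize that Corollary \ref{c0} is nothing more than the specialization of Lemma \ref{l-3} to constant coefficients. Indeed, $\bar{\Phi}$ is obtained from $\Phi$ by replacing the variable coefficients $a(x),b(x)$ with the constant functions $a(x)\equiv\bar{a}$ and $b(x)\equiv\bar{b}$. Since $\bar{a},\bar{b}>0$, these constant functions are nonnegative and satisfy $\lim_{|x|\to+\infty}a(x)=\bar{a}>0$ and $\lim_{|x|\to+\infty}b(x)=\bar{b}>0$, so hypotheses (A1) and (A2) hold trivially. Hence Lemma \ref{l-3} applies verbatim to $\bar{\Phi}$, and conclusions (i) and (ii) follow immediately. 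The remainder of the argument I give only to make the corollary self-contained, retracing the proof of Lemma \ref{l-3} with Corollary \ref{l-2} in place of Lemma \ref{l-1}.

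First I would invoke Corollary \ref{l-2} to pass to a subsequence and extract a limit $u\in\mathcal{E}_{p,q}$ for which $\{u_n\}$ is bounded in $\mathcal{E}_{p,q}$, hence in $l^{p}(\mathbb{V})$ and in $l^{q}(\mathbb{V})$, and $u_n\to u$ pointwise on $\mathbb{V}$. For part (i), I would apply the Br\'{e}zis-Lieb lemma (Lemma \ref{i}) separately to the three pieces of $\bar{\Phi}$. The nonlinear terms $\bar{a}\|u_n\|_p^p$ and $\bar{b}\|u_n\|_q^q$ are handled directly on the measure spaces $l^{p}(\mathbb{V})$ and $l^{q}(\mathbb{V})$, since pointwise convergence is $\mu$-almost everywhere convergence for the counting measure. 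For the Dirichlet term I would reformulate $\|\nabla u_n\|_2^2=\frac{1}{2}\|\bar{u}_n\|_{l^2(\overline{\mathbb{E}})}^2$ via the directed-edge function $\bar{u}_n(e)=u_n(e_+)-u_n(e_-)$ on $\overline{\mathbb{E}}=\{e=(e_-,e_+):e_\pm\in\mathbb{V},\,e_-\sim e_+\}$, noting that $\bar{u}_n\to\bar{u}$ pointwise on $\overline{\mathbb{E}}$, and then apply Lemma \ref{i} on $l^2(\overline{\mathbb{E}})$. Combining the three asymptotic identities with the coefficients $\frac{1}{2},\frac{1}{p},-\frac{1}{q}$ yields $\bar{\Phi}(u_n)-\bar{\Phi}(u_n-u)\to\bar{\Phi}(u)$.

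For part (ii), fixing $\phi\in C_c(\mathbb{V})$ with $\mathrm{supp}(\phi)\subseteq B_R$, I would bound $|\langle\bar{\Phi}'(u_n-u),\phi\rangle|$ by summing only over the finite set $B_{R+1}$, splitting it into the gradient term and the two nonlinear terms and applying H\"{o}lder's inequality on $B_{R+1}$ (resp. $B_R$). Since $B_{R+1}$ is finite and $u_n\to u$ pointwise, every factor involving $u_n-u$ tends to $0$, producing a bound $C\xi_n\|\phi\|_{\mathcal{E}_{p,q}}$ with $\xi_n\to0$ independent of $\phi$; taking the supremum over $\|\phi\|_{\mathcal{E}_{p,q}}=1$ gives $\bar{\Phi}'(u_n-u)\to0$ in $\mathcal{E}^{*}_{p,q}$. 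There is essentially no genuine obstacle here: all the content already resides in Lemma \ref{l-3}, and the only thing to verify is that constant positive coefficients satisfy (A1)--(A2). The one point worth flagging is the Dirichlet term, where one must pass to the directed edge set $\overline{\mathbb{E}}$ so that the Br\'{e}zis-Lieb lemma applies on $(\overline{\mathbb{E}},l^2)$; but this is precisely the device already used in the proof of Lemma \ref{l-3} and carries over unchanged.
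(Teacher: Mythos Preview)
Your proposal is correct and matches the paper's approach exactly: the paper states Corollary~\ref{c0} without proof, treating it as the immediate specialization of Lemma~\ref{l-3} to the constant-coefficient functional $\bar{\Phi}$, which is precisely what you identify. Your additional retracing of the argument via Corollary~\ref{l-2} and the Br\'{e}zis--Lieb lemma on $l^p(\mathbb{V})$, $l^q(\mathbb{V})$, and $l^2(\overline{\mathbb{E}})$ is faithful to the proof of Lemma~\ref{l-3} and introduces no new issues.
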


Finally, we give a relation between the functionals $\Phi$ and $\bar{\Phi}$.

\begin{lm}\label{l-5}
{\rm
Assume that (A1) and (A2) hold. Let a sequence $\{u_n\}\subset\mathcal{E}_{p,q}$ satisfy

\begin{equation}\label{2-6}
u_n\rightharpoonup0,\qquad \text{in}~\mathcal{E}_{p,q}.
\end{equation}
Then we have that

\begin{itemize}
\item[(i)] $\underset{n\rightarrow+\infty}{\lim}|\Phi(u_n)-\bar{\Phi}(u_n)|=0$;\\
\item[(ii)] $\underset{n\rightarrow+\infty}{\lim}(\Phi'(u_n)-\bar{\Phi}'(u_n))=0,\qquad \text{in~}\mathcal{E}^{*}_{p,q}$.
\end{itemize}
}
\end{lm}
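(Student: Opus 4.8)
The plan is to write the difference of the two functionals explicitly and to exploit that $a(x)-\bar a$ and $b(x)-\bar b$ are uniformly small outside a large ball, while $u_n\to 0$ on the (finite) ball itself. First I would record two consequences of the weak convergence \eqref{2-6}. Since weakly convergent sequences are bounded, $\sup_n\|u_n\|_{\mathcal{E}_{p,q}}=:C<+\infty$, and in particular $\|u_n\|_p$ and $\|u_n\|_q$ are bounded. Moreover, each point evaluation $u\mapsto u(x)$ is a bounded linear functional on $\mathcal{E}_{p,q}$, because $|u(x)|\leq\|u\|_p\leq\|u\|_{\mathcal{E}_{p,q}}$; hence \eqref{2-6} forces $u_n\to 0$ pointwise on $\mathbb{V}$. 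These two facts drive the whole argument.

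For (i), I would begin from
$$|\Phi(u_n)-\bar\Phi(u_n)|\leq\frac1p\sum_{x}|a(x)-\bar a|\,|u_n|^p+\frac1q\sum_{x}|b(x)-\bar b|\,|u_n|^q,$$
and split each sum over $B_R$ and $B_R^c$. Given $\varepsilon>0$, assumptions (A1) and (A2) let me choose $R$ so large that $|a(x)-\bar a|<\varepsilon$ and $|b(x)-\bar b|<\varepsilon$ on $B_R^c$; the tail is then controlled by $\frac{\varepsilon}{p}\|u_n\|_p^p+\frac{\varepsilon}{q}\|u_n\|_q^q\leq\varepsilon C'$, uniformly in $n$. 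The piece over the finite set $B_R$ is a finite sum whose terms each tend to $0$ by pointwise convergence, so it tends to $0$ as $n\to+\infty$. Taking $\limsup_n$ gives $\limsup_n|\Phi(u_n)-\bar\Phi(u_n)|\leq\varepsilon C'$, and letting $\varepsilon\to 0$ yields (i).

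For (ii), the same splitting is applied to
$$\langle\Phi'(u_n)-\bar\Phi'(u_n),\phi\rangle=\sum_{x}(a(x)-\bar a)|u_n|^{p-2}u_n\phi-\sum_{x}(b(x)-\bar b)|u_n|^{q-2}u_n\phi,$$
but now I must control the dual norm, i.e. estimate uniformly over $\|\phi\|_{\mathcal{E}_{p,q}}=1$. On $B_R^c$, using $|a-\bar a|<\varepsilon$ and H\"older with exponents $\frac{p}{p-1}$ and $p$, the $a$-term is bounded by $\varepsilon\|u_n\|_p^{p-1}\|\phi\|_p\leq\varepsilon C\|\phi\|_{\mathcal{E}_{p,q}}$, and likewise the $b$-term by $\varepsilon C\|\phi\|_{\mathcal{E}_{p,q}}$. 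On the finite ball $B_R$, H\"older on $B_R$ gives the bound $C\|u_n\|_{l^p(B_R)}^{p-1}\|\phi\|_{\mathcal{E}_{p,q}}+C\|u_n\|_{l^q(B_R)}^{q-1}\|\phi\|_{\mathcal{E}_{p,q}}$, and $\|u_n\|_{l^p(B_R)},\|u_n\|_{l^q(B_R)}\to 0$ by pointwise convergence on the finite set $B_R$. Dividing by $\|\phi\|_{\mathcal{E}_{p,q}}$, taking the supremum and then $\limsup_n$, leaves a bound $\leq\varepsilon C$; letting $\varepsilon\to 0$ gives (ii).

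The argument is a routine decomposition into a compact core plus a small tail, so there is no deep obstacle. The one point requiring care is (ii), where the estimate must be uniform in $\phi$ over the unit ball of $\mathcal{E}_{p,q}$ rather than merely pointwise in $\phi$. This uniformity is exactly what forces the use of H\"older's inequality with the conjugate exponents $\frac{p}{p-1}$ and $\frac{q}{q-1}$, so as to extract $\|\phi\|_p\leq\|\phi\|_{\mathcal{E}_{p,q}}$ and $\|\phi\|_q\leq\|\phi\|_{\mathcal{E}_{p,q}}$, in contrast to the gradient term where pointwise convergence alone suffices. The boundedness of $\|u_n\|_p$ and $\|u_n\|_q$ coming from \eqref{2-6} is what keeps the tail contributions of order $\varepsilon$.
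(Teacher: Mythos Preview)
Your proof is correct and follows essentially the same route as the paper: both arguments derive boundedness and pointwise convergence from the weak convergence, then split each sum over a large ball $B_R$ and its complement, using (A1)--(A2) to make the tail of order $\varepsilon$ and the finiteness of $B_R$ to kill the core. Your treatment of (ii) is in fact more detailed than the paper's, which simply writes ``by similar arguments as in the proof of (i)''; your explicit use of H\"older with conjugate exponents to obtain an estimate uniform in $\phi$ over the unit ball of $\mathcal{E}_{p,q}$ is exactly what is needed (one small remark: there is no gradient term in $\Phi'-\bar\Phi'$, so the aside about ``the gradient term'' in your last paragraph can be dropped).
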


\begin{proof}
By (\ref{2-6}), one sees that $\{u_n\}$ is bounded in $\mathcal{E}_{p,q}$ and $u_n\rightarrow 0$ pointwise in $\mathbb{V}$ as $n\rightarrow+\infty$.
For any $R>0$, since $B_R$ is a finite set, we have that, for any $s\geq 1$,
\begin{equation}\label{84}
u_n\rightarrow 0,\qquad \text{in}~l^{s}(B_{R}).
\end{equation}

By (A1) and (A2), for any $\varepsilon>0$, there exists $R_{\varepsilon}>0$ such that
\begin{eqnarray*}
|a(x)-\bar{a}|<\varepsilon \qquad \text{and}\qquad |b(x)-\bar{b}|<\varepsilon, \qquad x\in B_{R_{\varepsilon}^{c}}.
\end{eqnarray*}

(i) Given $\varepsilon$ and $R_\varepsilon$, then, by (\ref{84}), there exists $n_\varepsilon$ such that, for all $n\geq n_\varepsilon$,
\begin{eqnarray*}
|\int_{\mathbb{V}}(a(x)-\bar{a})|u_n|^p \,d\mu|&\leq&\int_{B_{R_{\varepsilon}^{c}}}|a(x)-\bar{a}||u_n|^p \,d\mu+\int_{B_{R_{\varepsilon}}}|a(x)-\bar{a}||u_n|^p \,d\mu\\&\leq&
\varepsilon\|u_n\|_{p}^{p}+C\|u_n\|^{p}_{l^{p}(B_{R_\varepsilon})}\\&\leq&C\varepsilon
\end{eqnarray*}
and
\begin{eqnarray*}
|\int_{\mathbb{V}}(b(x)-\bar{b})|u_n|^q d\mu|&\leq&\int_{B_{R_{\varepsilon}^{c}}}|b(x)-\bar{b}||u_n|^q d\mu+\int_{B_{R_{\varepsilon}}}|b(x)-\bar{b}||u_n|^q d\mu\\&\leq&
\varepsilon\|u_n\|_{q}^{q}+C\|u_n\|^{q}_{l^{q}(B_{R_{\varepsilon}})}\\&\leq&C\varepsilon,
\end{eqnarray*}
where $C$ does not depend on $n$ and $\varepsilon$. Therefore, by letting $\varepsilon\rightarrow0$, we get that
$$\underset{n\rightarrow+\infty}{\lim}|\Phi(u_n)-\bar{\Phi}(u_n)|=0.$$

(ii) By similar arguments as in the proof of (i), one can obtain that
$$\underset{n\rightarrow+\infty}{\lim}(\Phi'(u_n)-\bar{\Phi}'(u_n))=0,\qquad \text{in~}\mathcal{E}^{*}_{p,q}.$$

\end{proof}

\
\section{Proofs of the main results}
In this section, we are devoted to prove Theorem \ref{t-9} and \ref{t-0} by using the Br\'{e}zis-Lieb lemma and concentration compactness principle.
\
\

First, we prove the existence of positive solutions to the equation $$-\Delta u+\tilde{a}|u|^{p-2}u-\tilde{b}|u|^{q-2}u=0.$$

\
\

\noindent {\bf Proof of Theorem  \ref{t-9}:}
For any $\tilde{a}>0$, we define functionals $J_1, J_2 : \mathcal{E}_{p,q}\rightarrow\R$ in the form
\begin{eqnarray*}
J_{1}(u)=\frac{1}{2}\|\nabla u\|_{2}^{2}+\frac{\tilde{a}}{p}\|u\|_{p}^{p},\qquad
J_{2}(u)=\beta\|u\|_{q}^{q},
\end{eqnarray*}
where $\beta$ is a positive parameter.

\noindent {\bf Step I:} We show that the variational problem
\begin{equation}\label{7-1}
\lambda_{0}:=\inf\{J_1(u):u\in \mathcal{E}_{p,q},J_2(u)=1\}>0
\end{equation}
has a nonnegative solution $u_{(0)}\in\mathcal{E}_{p,q}$.

Without loss of generality, we may assume that $u\geq0$ since $J_1(|u|)\leq J_1(u)$ and $J_2(|u|)=J_2(u)$. Let $\{u_n\}\subset \mathcal{E}_{p,q}$ be a nonnegative minimizing sequence satisfying
\begin{eqnarray*}\label{7-2}
J_{2}(u_{n})=1,\qquad \underset{n\rightarrow+\infty}{\lim}J_{1}(u_{n})=\lambda_{0},
\end{eqnarray*}
which implies that $\|u_n\|_{\mathcal{E}_{p,q}}\leq C$. Hence, passing to a subsequence if necessary, we have that
\begin{eqnarray*}\label{7-8}
\begin{array}{ll}
u_n\rightharpoonup u_{(0)},\qquad \text{in}~\mathcal{E}_{p,q},\\
u_n\rightarrow u_{(0)}, \qquad \text{pointwise~in}~\mathbb{V}.
\end{array}
\end{eqnarray*}
 By the Fatou lemma, we obtain that
\begin{eqnarray*}
\begin{array}{ll}
J_{1}(u_{(0)})\leq\underset{n\rightarrow+\infty}{\lim}J_{1}(u_{n})=\lambda_{0},\\
J_{2}(u_{(0)})\leq\underset{n\rightarrow+\infty}{\lim}J_{2}(u_{n})=1.
\end{array}
\end{eqnarray*}

We claim that $J_{2}(u_{(0)})=1.$ This implies that
$$\underset{n\rightarrow+\infty}{\lim}J_{1}(u_{n})=J_{1}(u_{(0)})=\lambda_{0}.$$ Then the proof of Step I is completed.

We prove the claim by contradiction. Suppose that there exists some constant $\theta_{0}$ such that $\theta_{0}=J_{2}(u_{(0)})<1.$
Since $p<q<+\infty$, by interpolation inequality, for $n$ large enough, we get that
\begin{eqnarray*}
1=J_2(u_n)=\beta\|u_n\|_{q}^{q}\leq\beta\|u_n\|_{p}^{p}\|u_n\|_{\infty}^{q-p}\leq\beta(1+\lambda_{0})\|u_n\|_{\infty}^{q-p}.
\end{eqnarray*}
Consequently,
\begin{eqnarray*}
\|u_n\|_{\infty}\geq[\frac{1}{\beta(1+\lambda_{0})}]^{\frac{1}{q-p}}\triangleq\sigma_{0}>0.
\end{eqnarray*}
After a subsequence and translation if necessary, we may assume that
\begin{eqnarray*}
u_n(0)=\|u_n\|_{\infty}\geq\sigma_{0}>0.
\end{eqnarray*}
Since $u_n(x)\rightarrow u_{(0)}(x)$ pointwise in $\mathbb{V}$, we have that $u_{(0)}(0)\geq\sigma_{0}>0.$ This yields that
 \begin{eqnarray*}
\theta_{0}=J_{2}(u_{(0)})\geq\beta\sigma_{0}^q=[\frac{1}{\beta^{\frac{p}{q}}(1+\lambda_{0})}]^{\frac{q}{q-p}}>0.
\end{eqnarray*}

Denote
\begin{eqnarray*}\label{1}
v^{1}_{n}(x)=u_{n}(x)-u_{(0)}(x).
\end{eqnarray*}
Then we have that
\begin{equation}\label{7-7}
\begin{array}{ll}
v^{1}_n\rightharpoonup 0, \qquad \text{in}~\mathcal{E}_{p,q},\\
v^{1}_n\rightarrow 0, \qquad \text{pointwise~in}~\mathbb{V}.
\end{array}
\end{equation}
According to the Br\'{e}zis-Lieb lemma, we obtain that
\begin{equation}\label{7-3}
\begin{array}{ll}
J_{1}(u_{n})=J_{1}(u_{(0)})+J_{1}(v^{1}_{n})+o(1),\\
J_{2}(u_{n})=J_{2}(u_{(0)})+J_{2}(v^{1}_n)+o(1).
\end{array}
\end{equation}
The above results imply, for $n$ large enough, that
\begin{eqnarray*}\label{7-5}
\begin{array}{ll}
J_{1}(v^{1}_{n})\leq(1+\lambda_{0}),\\
J_{2}(v^{1}_n)=1-\theta_{0}+o(1)\geq\frac{1}{2}(1-\theta_{0}).
\end{array}
\end{eqnarray*}
Again, by interpolation inequality, for $n$ large enough, one has that
\begin{eqnarray*}
\frac{1}{2}(1-\theta_{0})\leq J_{2}(v^{1}_n)=\beta\|v^{1}_n\|^{q}_{q}\leq\beta\|v^{1}_n\|^{p}_{p}\|v^{1}_n\|^{q-p}_{\infty}\leq\beta(1+\lambda_{0})\|v^{1}_n\|^{q-p}_{\infty},
\end{eqnarray*}
and hence
\begin{eqnarray*}
\|v^{1}_n\|_{\infty}\geq[\frac{(1-\theta_{0})}{2\beta(1+\lambda_{0})}]^{\frac{1}{q-p}}\triangleq\sigma_{1}>0.
\end{eqnarray*}
Then there exists a sequence $\{x^{1}_{n}\}\subset\mathbb{V}$ such that $|v^{1}_n(x^1_n)|=\|v^{1}_n\|_{\infty}\geq\sigma_{1}>0.$

Let
\begin{eqnarray*}\label{2}
u_{n(1)}(x)=v^{1}_n(x+x^{1}_n).
\end{eqnarray*}
We assume that for some $u_{(1)}\in\mathcal{E}_{p,q}$, as $n\rightarrow+\infty$,
\begin{equation}\label{7-10}
\begin{array}{ll}
u_{n(1)}\rightharpoonup u_{(1)}, \qquad \text{in}~\mathcal{E}_{p,q},\\
u_{n(1)}\rightarrow u_{(1)}, \qquad \text{pointwise~in}~\mathbb{V}.
\end{array}
\end{equation}
Clearly, $|u_{(1)}(0)|\geq\sigma_{1}>0$. Hence, by (\ref{7-7}) and (\ref{7-10}), one sees that $|x^{1}_n|\rightarrow+\infty$ as $n\rightarrow+\infty$.
We also see that
\begin{eqnarray*}
\theta_{1}\triangleq J_{2}(u_{(1)})\geq\beta\sigma_{1}^{q}=[\frac{(1-\theta_{0})}{2\beta^{\frac{p}{q}}(1+\lambda_{0})}]^{\frac{q}{q-p}}>0.
\end{eqnarray*}
Note that the invariance of functionals with respect to translations, we have that
\begin{equation}\label{5}
J_{1}(u_{n(1)})=J_{1}(v^{1}_n), \qquad J_{2}(u_{n(1)})=J_{2}(v^{1}_n).
\end{equation}

Denote
\begin{equation}\label{3}
v^{2}_{n}(x)=u_{n}(x)-u_{(0)}(x)-u_{(1)}(x-x^{1}_n).
\end{equation}
Then it follows from (\ref{7-10}) that
\begin{eqnarray*}\label{7-17}
\begin{array}{ll}
v^{2}_n\rightharpoonup 0, \qquad \text{in}~\mathcal{E}_{p,q},\\
v^{2}_n\rightarrow 0, \qquad \text{pointwise~in}~\mathbb{V}.
\end{array}
\end{eqnarray*}
By (\ref{7-3}), (\ref{5}) and the Br\'{e}zis-Lieb lemma, we obtain that
\begin{equation}\label{7-12}
\begin{array}{ll}
J_{1}(u_{n})=J_{1}(u_{(0)})+J_{1}(u_{(1)})+J_{1}(v^{2}_n)+o(1),\\
1=J_{2}(u_{(0)})+J_{2}(u_{(1)})+J_{2}(v^{2}_n)+o(1)=\theta_0+\theta_{1}+J_{2}(v^{2}_n)+o(1).
\end{array}
\end{equation}

 If $\theta_0+\theta_{1}=1$, i.e. $J_{2}(u_{(0)})+J_2(u_{(1)})=1$, one gets that
\begin{eqnarray*}
J_{1}(u_{(0)})+J_{1}(u_{(1)})\leq\lambda_{0},\qquad J_{2}(v^{2}_n)=o(1).
\end{eqnarray*}
As a consequence, at least one of $\frac{J_{1}(u_{(0)})}{J_{2}(u_{(0)})}$ and $\frac{J_{1}(u_{(1)})}{J_{2}(u_{(1)})}$ is less than $\lambda_{0}$.
Without loss of generality, we assume that $\frac{J_{1}(u_{(1)})}{J_{2}(u_{(1)})}\leq\lambda_{0}$. Let $\bar{u}_{(1)}=\theta_{1}^{-\frac{1}{q}}u_{(1)}$, then
$J_{2}(\bar{u}_{(1)})=1$ and
\begin{eqnarray*}
\frac{J_{1}(\bar{u}_{(1)})}{J_{2}(\bar{u}_{(1)})}&\leq&\frac{\theta_{1}^{\frac{-2}{q}}\frac{1}{2}\|\nabla u_{(1)}\|^{2}_{2}+\theta_{1}^{\frac{-p}{q}}\frac{\bar{a}}{p}\|u_{(1)}\|_{p}^{p}}{\theta_{1}^{-1}J_{2}(u_{(1)})}\\&=&\frac{\theta_{1}^{\frac{q-2}{q}}\frac{1}{2}\|\nabla u_{(1)}\|^{2}_{2}+\theta_{1}^{\frac{q-p}{q}}\frac{\bar{a}}{p}\|u_{(1)}\|_{p}^{p}}{J_{2}(u_{(1)})}\\&<&\frac{J_{1}(u_{(1)})}{J_{2}(u_{(1)})}\leq\lambda_{0}.
\end{eqnarray*}
This yields a contradiction.

 If $\theta_0+\theta_{1}<1$, from (\ref{7-12}), for $n$ large enough, one has that
\begin{eqnarray*}
\frac{1}{2}(1-\theta_{0}-\theta_{1})\leq J_{2}(v^{2}_n)=\beta\|v^{2}_n\|^{q}_{q}\leq\beta\|v^{2}_n\|^{p}_{p}\|v^{2}_n\|^{q-p}_{\infty}\leq\beta(1+\lambda_{0})\|v^{2}_n\|^{q-p}_{\infty},
\end{eqnarray*}
and hence
\begin{eqnarray*}
\|v^{2}_n\|_{\infty}\geq[\frac{(1-\theta_{0}-\theta_{1})}{2\beta(1+\lambda_{0})}]^{\frac{1}{q-p}}\triangleq\sigma_{2}>0.
\end{eqnarray*}
Then there exists a sequence $\{x^{2}_{n}\}\subset\mathbb{V}$ such that $|v^{2}_n(x^{2}_{n})|=\|v^{2}_n\|_{\infty}\geq\sigma_{2}>0.$

Let
\begin{equation}\label{4}
u_{n(2)}(x)=v^{2}_n(x+x^{2}_{n}).
\end{equation}
There exists some $u_{(2)}\in\mathcal{E}_{p,q}$, as $n\rightarrow+\infty$,  such that
\begin{eqnarray*}\label{7-15}
\begin{array}{ll}
u_{n(2)}\rightharpoonup u_{(2)}, \qquad \text{in}~\mathcal{E}_{p,q},\\
u_{n(2)}\rightarrow u_{(2)}, \qquad \text{pointwise~in}~\mathbb{V},
\end{array}
\end{eqnarray*}
where $|u_{(2)}(0)|\geq\sigma_{2}>0$. Similarly, we have that $|x^{2}_{n}|\rightarrow+\infty$ as $n\rightarrow+\infty$.
In fact, we also have that $|x^{2}_{n}-x^{1}_n|\rightarrow+\infty$ as $n\rightarrow+\infty$. Suppose it is not true, then $\{x^{2}_n-x^{1}_n\}$ is bounded in $\mathbb{V}$. Thus there exists a point $z_{0}\in\mathbb{V}$ such that $(x^{2}_n-x^{1}_n)\rightarrow z_0$ as $n\rightarrow+\infty.$

For any $x\in \mathbb{V}$, it follows from (\ref{3}) and (\ref{4}) that
\begin{eqnarray*}
u_n(x)=u_{(0)}(x)+u_{(1)}(x-x^{1}_n)+u_{n(2)}(x-x^{2}_n).
\end{eqnarray*}
In particular, we have that
\begin{eqnarray*}
u_{n(2)}(x-x^{2}_n+x^{1}_n)=u_n(x+x^{1}_n)-u_{(0)}(x+x^{1}_n)-u_{(1)}(x).
\end{eqnarray*}
By taking $x=z_0$, we get that
\begin{equation}\label{7-18}
u_{n(2)}(z_0-x^{2}_n+x^{1}_n)=u_{n(1)}(z_0)-u_{(1)}(z_0).
\end{equation}
Since $(x^{2}_n-x^{1}_n)\rightarrow z_0$ as $n\rightarrow+\infty$ and $|u_{(2)}(0)|>0$, one sees that $u_{n(2)}(z_0-x^{2}_n+x^{1}_n)\nrightarrow 0$ in the left hand side of (\ref{7-18}). While the right hand side tends to zero as $n\rightarrow+\infty$ due to (\ref{7-10}). We get a contradiction.
Furthermore, we have that
\begin{eqnarray*}
\theta_{2}\triangleq J_{2}(u_{(2)})\geq\beta\sigma_{2}^{q}=[\frac{(1-\theta_{0}-\theta_{1})}{2\beta^{\frac{p}{q}}(1+\lambda_{0})}]^{\frac{q}{q-p}}>0.
\end{eqnarray*}

We repeat the process. By iteration, for $\{v^{j}_{n}\}$ with $j\geq 2$, if $\underset {i=0}{\overset{j-1}{\Sigma}}\theta_{i}=1$, we get a contradiction; if $\underset {i=0}{\overset{j-1}{\Sigma}}\theta_{i}<1$, we get that
$$v^{j+1}_{n}(x)=u_{n}(x)-u_{(0)}(x)-\underset {i=1}{\overset{j}{\Sigma}}u_{(i)}(x-x^{i}_n)$$
with $|x^{i}_n|\rightarrow+\infty$ and $|x^{j}_n-x^{i}_n|\rightarrow+\infty,\,j\neq i$ as $n\rightarrow+\infty$, and
\begin{eqnarray*}
\theta_{j}\triangleq J_{2}(u_{(j)}) \geq\beta\sigma^{q}_{j}=[\frac{1-\underset {i=0}{\overset{j-1}{\Sigma}}\theta_{i}}{2\beta^{\frac{p}{q}}(1+\lambda_{0})}]^{\frac{q}{q-p}}>0.
\end{eqnarray*}
If the procedure terminate at $\theta_0+\theta_{1}+\cdots+\theta_{k}=1$ with some integer $k\geq 1$, we get a contradiction. If not, then there exists a sequence $\{\theta_{i}\},\, i=0,1,\cdots$, such that $\underset {i=0}{\overset{\infty}{\Sigma}}\theta_{i}\leq 1$. We prove that $\underset {i=0}{\overset{\infty}{\Sigma}}\theta_{i}= 1$. By contradiction, assume that $\underset {i=0}{\overset{\infty}{\Sigma}}\theta_{i}=\delta< 1$, without loss of generality, let $\underset {i=0}{\overset{n}{\Sigma}}\theta_{i}=\delta-\varepsilon_{0}<\delta$, where $\varepsilon_{0}>0$ is an arbitrary constant, then $\theta_{n+1}\geq[\frac{1-(\delta-\varepsilon_{0})}{2\beta^{\frac{p}{q}}(1+\lambda_{0})}]^{\frac{q}{q-p}}>0.$ By taking $\varepsilon_{0}$ small enough, we get that $\underset {i=0}{\overset{n+1}{\Sigma}}\theta_{i}>\delta$, which contradicts $\underset {i=0}{\overset{\infty}{\Sigma}}\theta_{i}=\delta$. Hence $\underset {i=0}{\overset{\infty}{\Sigma}}\theta_{i}= 1$, which yields a decomposition of $\{u_n\}$ with infinitely many bubbles
$$u_n(x)=u_{(0)}(x)+\underset {i=1}{\overset{\infty}{\Sigma}}u_{(i)}(x-x^{i}_n),$$
where $|x^{i}_n|\rightarrow+\infty$ and $|x^{j}_n-x^{i}_n|\rightarrow+\infty,\,j\neq i$ as $n\rightarrow+\infty$. Similar to the case $\theta_0+\theta_{1}=1$, we also get a contradiction. Hence $J_{2}(u_{(0)})=\theta_{0}<1$ is impossible. We complete the claim $J_{2}(u_{(0)})=1$.

\noindent {\bf Step II:}  We prove that for any $\tilde{a}>0$, there are infinitely many positive constants $\tilde{b}$ such that the equation (\ref{0-0}) has a positive solution.

By the Lagrange theorem, there exists $\lambda>0$ such that
\begin{eqnarray*}
\int_{\mathbb{V}}|\nabla u_{(0)}|^{2}\,d\mu+\tilde{a}\int_{\mathbb{V}}u_{(0)}^{p}\,d\mu=\lambda q\beta\int_{\mathbb{V}}u_{(0)}^{q}\,d\mu=\lambda q.
\end{eqnarray*}

On the one hand,
\begin{eqnarray*}
\lambda&=&\frac{1}{q}\int_{\mathbb{V}}|\nabla u_{(0)}|^{2}\,d\mu+\frac{\tilde{a}}{q}\int_{\mathbb{V}}u_{(0)}^{p}\,d\mu\\&\geq&
\frac{\tilde{a}}{q}\int_{\mathbb{V}}u_{(0)}^{p}\,d\mu\\&\geq&\frac{\tilde{a}}{q}(\int_{\mathbb{V}}u_{(0)}^{q}\,d\mu)^{\frac{p}{q}}\\&=&
\frac{\tilde{a}}{q}\frac{1}{\beta^{\frac{p}{q}}},
\end{eqnarray*}
where we have used the fact that $\|u\|_q\leq\|u\|_p$ with $p<q<+\infty$.

On the other hand, let
\begin{eqnarray*}
v_0(x)=
\left\{
\begin{array}{ll}
\frac{1}{\beta^{\frac{1}{q}}}, &x=0,\\
0,  &x\neq0.
\end{array}
\right.
\end{eqnarray*}
Then $v_0\in\mathcal{E}_{p,q}$ and $J_2(v_0)=1$. Since $J_{1}(u_{(0)})=\lambda_{0}$ and $J_{2}(u_{(0)})=1$, we get that
$J_{1}(u_{(0)})\leq J_{1}(v_{0}).$ Therefore, for $2\leq p<q<+\infty$,
\begin{eqnarray*}
\lambda&=&\frac{1}{q}\int_{\mathbb{V}}|\nabla u_{(0)}|^{2}\,d\mu+\frac{\tilde{a}}{q}\int_{\mathbb{V}}u_{(0)}^{p}\,d\mu\\&\leq&
\frac{1}{2}\int_{\mathbb{V}}|\nabla u_{(0)}|^{2}\,d\mu+\frac{\tilde{a}}{p}\int_{\mathbb{V}}u_{(0)}^{p}\,d\mu\\&\leq&J_{1}(v_{0})
\\&\leq&\frac{C}{\beta^{\frac{2}{q}}}+\frac{\tilde{a}}{p}\frac{1}{\beta^{\frac{p}{q}}}.
\end{eqnarray*}
As a consequence, we have the following estimate
\begin{eqnarray*}
C\beta^{\frac{q-p}{q}}\leq\lambda\beta q\leq C(\beta^{\frac{q-2}{q}}+\beta^{\frac{q-p}{q}}).
\end{eqnarray*}
Denote $\tilde{b}=\lambda\beta q$. Clearly, we have that $\tilde{b}\rightarrow 0$ as $\beta\rightarrow 0^{+}$ and $\tilde{b}\rightarrow+\infty$ as $\beta\rightarrow+\infty$. This implies that for any $\tilde{a}>0$, there are infinitely many positive constants $\tilde{b}$ such that the equation
\begin{eqnarray*}
-\Delta u+\tilde{a}|u|^{p-2}u-\tilde{b}|u|^{q-2}u=0
\end{eqnarray*}
has a nonnegative solution $u_{(0)}\in\mathcal{E}_{p,q}$. By maximum principle, we get that $u_{(0)}(x)>0$ for any $x\in\mathbb{V}$.
\qed

\
\

Finally, we  give a decomposition of bounded Palais-Smale sequence for the functional $\Phi$ related to the equation (\ref{1-0}).

\
\

{\bf Proof of Theorem \ref{t-0}}:
Let $\{u_n\}\subset\mathcal{E}_{p,q}$ be a Palais-Smale sequence of $\Phi$ at level $c\in\R$. By Lemma \ref{l-1}, there exists some $u_{(0)}\in\mathcal{E}_{p,q}$ such that
\begin{eqnarray*}\label{3-0}
\begin{array}{ll}
u_n\rightharpoonup u_{(0)},\qquad \text{in}~\mathcal{E}_{p,q},\\
u_n\rightarrow u_{(0)}, \qquad \text{pointwise~in}~\mathbb{V},\\
\Phi'(u_{(0)})=0,\qquad \text{in}~\mathcal{E}^{*}_{p,q}.
\end{array}
\end{eqnarray*}
The third equality tells us that $u_{(0)}$ is a solution of the equation (\ref{1-0}).

Denote
\begin{eqnarray*}\label{3-9}
v^{1}_n(x)=u_n(x)-u_{(0)}(x).
\end{eqnarray*}
Then we have that
\begin{eqnarray*}
\begin{array}{ll}
v^{1}_n\rightharpoonup 0,\qquad \text{in}~\mathcal{E}_{p,q},\\
v^{1}_n\rightarrow 0, \qquad \text{pointwise~in}~\mathbb{V}.
\end{array}
\end{eqnarray*}
According to Lemma \ref{l-3}, one obtains that
\begin{equation}\label{3-21}
\begin{array}{ll}
\Phi(v^{1}_n)=c-\Phi(u_{(0)})+o(1),\\
\Phi'(v^{1}_n)=o(1),\qquad \text{in}~\mathcal{E}^{*}_{p,q}.
\end{array}
\end{equation}
By Lemma \ref{l-5}, one gets that
\begin{equation}\label{3-3}
\begin{array}{ll}
\bar{\Phi}(v^{1}_n)=c-\Phi(u_{(0)})+o(1),\\
\bar{\Phi}'(v^{1}_n)=o(1),\qquad \text{in}~\mathcal{E}^{*}_{p,q}.
\end{array}
\end{equation}
For $\{v_{n}^{1}\}$, two cases are possible:

{\bf Case 1.} $\underset{n\rightarrow+\infty}{\overline{\lim}}\|v^1_{n}\|_{\infty}=0$.  By the boundedness of $\{v^1_{n}\}$ in $\mathcal{E}_{p,q}$ and Lions lemma, we have that $\|b^{\frac{1}{q}}v^{1}_n\|_{q}\rightarrow 0,$ as $ n\rightarrow+\infty.$ Then
combined with (\ref{3-21}), we get that
\begin{eqnarray*}
\|\nabla v^{1}_n\|^{2}_{2}+\|a^{\frac{1}{p}}v^{1}_n\|^{p}_{p}&=& \langle\Phi'(v^{1}_n),v^{1}_{n}\rangle+\|b^{\frac{1}{q}}v^{1}_n\|^{q}_{q}\\&\leq&
\|\Phi'(v^{1}_n)\|_{\mathcal{E}^{*}_{p(a),q(b)}}\|v^{1}_n\|_{\mathcal{E}_{p(a),q(b)}}+\|b^{\frac{1}{q}}v^{1}_n\|^{q}_{q}\\&\leq&
C\|\Phi'(v^{1}_n)\|_{\mathcal{E}^{*}_{p(a),q(b)}}+\|b^{\frac{1}{q}}v^{1}_n\|^{q}_{q}\\&\rightarrow&0, \qquad  n\rightarrow+\infty.
\end{eqnarray*}
Consequently, as $n\rightarrow+\infty$, one concludes that $v^{1}_n\rightarrow 0$ in $\mathcal{E}_{p,q},$ and hence $c=\Phi(u_{(0)}).$

The proof ends with $k=0$.

{\bf Case 2.} $\underset{n\rightarrow+\infty}{\underline{\lim}}\|v_{n}\|_{\infty}=\delta>0$.
Then there exists a sequence $\{y^{1}_{n}\}\subset\mathbb{V}$ such that $|v^{1}_{n}(y^{1}_{n})|\geq\frac{\delta}{2}>0.$

Let
\begin{eqnarray*}
u_{n(1)}(x)=v^{1}_{n}(x+y^{1}_{n}).
\end{eqnarray*}
We may assume that for some $u_{(1)}\in\mathcal{E}_{p,q}$, as $n\rightarrow+\infty$,
\begin{eqnarray*}\label{3-10}
\begin{array}{ll}
u_{n(1)}\rightharpoonup u_{(1)}, \qquad \text{in}~\mathcal{E}_{p,q},\\
u_{n(1)}\rightarrow u_{(1)}, \qquad \text{pointwise~in}~\mathbb{V}.
\end{array}
\end{eqnarray*}
Clearly, $|u_{(1)}(0)|\geq\frac{\delta}{2}>0$. Since $v^{1}_n\rightarrow 0$ pointwise in $\mathbb{V}$, we see that $|y^{1}_{n}|\rightarrow+\infty$ as $n\rightarrow+\infty$.

By the invariance of $\bar{\Phi}$ with respect to translations and (\ref{3-3}), we have that
\begin{eqnarray*}\label{3-7}
\begin{array}{ll}
\bar{\Phi}(u_{n(1)})=c-\Phi(u_{(0)})+o(1),\\
\bar{\Phi}'(u_{n(1)})=o(1),\qquad \text{in}~\mathcal{E}^{*}_{p,q}.
\end{array}
\end{eqnarray*}
Then it follows from Corollary \ref{l-2} that
\begin{eqnarray*}
\bar{\Phi}'(u_{(1)})=0,\qquad \text{in}~\mathcal{E}^{*}_{p,q}.
\end{eqnarray*}
This means that $u_{(1)}$ is a nontrivial solution of the equation (\ref{1-2}).

Denote
\begin{eqnarray*}\label{3-11}
v^{2}_{n}(x)=u_{n}(x)-u_{(0)}(x)-u_{(1)}(x-y^{1}_n).
\end{eqnarray*}
Then we have that
\begin{eqnarray*}\label{3-17}
\begin{array}{ll}
v^{2}_n\rightharpoonup 0, \qquad \text{in}~\mathcal{E}_{p,q},\\
v^{2}_n\rightarrow 0, \qquad \text{pointwise~in}~\mathbb{V}.
\end{array}
\end{eqnarray*}
According to Corollary \ref{c0}, we obtain that
\begin{equation}\label{4-1}
\begin{array}{ll}
\bar{\Phi}(v^{2}_n)=c-\Phi(u_{(0)})-\bar{\Phi}(u_{(1)})+o(1),\\
\bar{\Phi}'(v^{2}_n)=o(1),\qquad \text{in}~\mathcal{E}^{*}_{p,q}.
\end{array}
\end{equation}
For $\{v^{2}_n\}$, two cases are possible. If the vanishing case occurs for $\|v^{2}_n\|_{\infty}$, by similar arguments as in {\bf Case 1}, one concludes that
\begin{eqnarray*}
\begin{array}{ll}
v^{2}_n\rightarrow 0,\qquad \text{in}~\mathcal{E}_{p,q},\\
c=\Phi(u_{(0)})+\bar{\Phi}(u_{(1)}).
\end{array}
\end{eqnarray*}
The proof is completed with $k=1$.

If the non-vanishing occurs for $\|v^{2}_n\|_{\infty}$, by analogous discussions as in {\bf Case 2} and Theorem \ref{t-9}, there exists a sequence $\{y^{2}_{n}\}\subset\mathbb{V}$ such that $|y_n^2|\rightarrow+\infty$ and $|y_n^2-y_n^1|\rightarrow+\infty$ as $n\rightarrow+\infty$,
and a nontrivial function $u_{(2)}\in\mathcal{E}_{p,q}$  satisfying the equation (\ref{1-2}).

We repeat the process. By iteration, for $\{v^{j}_n\}$ with $j\in\mathbb{N}^{+}$, if the vanishing case occurs for $\|v^{j}_n\|_{\infty}$, we get that $v^{j}_n\rightarrow0$ in $\mathcal{E}_{p,q}$ and $c=\Phi(u_{(0)})+\underset {i=1}{\overset{j-1}{\Sigma}}\bar{\Phi}(u_{(i)})$; if the non-vanishing case occurs for $\|v^{j}_n\|_{\infty}$, we get that
$$v^{j+1}_{n}(x)=u_{n}(x)-u_{(0)}(x)-\underset {i=1}{\overset{j}{\Sigma}}u_{(i)}(x-y^{i}_n)$$
with $|y^{i}_{n}|\rightarrow+\infty$ and $|y^{j}_{n}-y^{i}_{n}|\rightarrow+\infty,\,j\neq i$ as $n\rightarrow+\infty$, and
\begin{eqnarray*}
\begin{array}{ll}
\bar{\Phi}(v^{j+1}_n)=c-\Phi(u_{(0)})-\underset {i=1}{\overset{j}{\Sigma}}\bar{\Phi}(u_{(i)})+o(1),\\
\bar{\Phi}'(v^{j+1}_n)=o(1),\qquad \text{in}~\mathcal{E}^{*}_{p,q}.
\end{array}
\end{eqnarray*}

Since, for every $j$, $\bar{\Phi}(u_{(j)})\geq I_{0}(\bar{\Phi})>0$, the iteration must stop after finite steps, for example at some $k$. That is, the last Palais-Smale sequence denoted by $$v^{k+1}_{n}=u_{n}(x)-u_{(0)}(x)-\underset {i=1}{\overset{k}{\Sigma}}u_{(i)}(x-y^{i}_n)$$  must satisfy
\begin{eqnarray*}
v^{k+1}_n\rightarrow 0,\qquad \text{in}~\mathcal{E}_{p,q},
\end{eqnarray*}
and hence
\begin{eqnarray*}
c=\Phi(u_{(0)})+\underset {i=1}{\overset{k}{\Sigma}}\bar{\Phi}(u_{(i)}).
\end{eqnarray*}
Therefore, we complete the proof of Theorem \ref{t-0}.\qed

\

\section{Acknowledgements}
We would like to take this opportunity to express our gratitude to Genggeng Huang, who has given us so much valuable suggestions on our results, and has tried his best to improve our paper. B. H. is supported by NSFC, no.11831004.

\end{document}